\documentclass[a4paper,12pt]{article}
\PassOptionsToPackage{dvipsnames}{xcolor}

%----------------------------------------------------------------------
%
% Geometry
%
%----------------------------------------------------------------------

\setlength{\parskip}{0pc}
\setlength{\textwidth}{38pc}
\setlength{\textheight}{56pc}
\setlength{\topmargin}{-1.2cm}
\setlength\oddsidemargin{0cm}
\setlength\evensidemargin{0cm}

%----------------------------------------------------------------------
%
% Language and Encoding
%
%----------------------------------------------------------------------

\usepackage[utf8]{inputenc}
\usepackage[english]{babel}

%----------------------------------------------------------------------
%
% Common General Packages
%
%----------------------------------------------------------------------

\usepackage{subcaption}
\usepackage{datetime}
\usepackage{enumerate}

% Math
\usepackage{amsfonts,amsmath,amssymb,amsbsy,amsthm}
\usepackage{mathtools}
% For a right pdf-output of special charcters
\usepackage[T1]{fontenc}
\usepackage{lmodern}

% For \floatbox
\usepackage{floatrow}

% For environment `comment`
\usepackage{verbatim}

%
%\usepackage{color}
%\usepackage{xcolor}

%----------------------------------------------------------------------
%
% Hyperref
%
%----------------------------------------------------------------------

\usepackage[backref=page]{hyperref}
\hypersetup{%
	colorlinks,
	linkcolor={red!60!black},
	citecolor={green!60!black},
	urlcolor={blue!60!black}
}

%----------------------------------------------------------------------
%
% TIKZ
%
%----------------------------------------------------------------------

\usepackage{tikz}
\usetikzlibrary{calc,positioning,decorations.pathmorphing,decorations.pathreplacing}
\tikzset{emp/.style={double distance = 0.3ex}}
\tikzset{oriented/.style={->,shorten >= 1.5pt}}
\usepackage{./Figures/figstyle}
\usepackage{pgfplots}
% \usetikzlibrary{external}
% \tikzexternalenable
% \tikzexternalize[shell escape=-enable-write18, prefix=tikzOutputImg/]
% \tikzset{external/system call={pdflatex \tikzexternalcheckshellescape -halt-on-error -interaction=batchmode -jobname "\image" "\texsource"}}

%----------------------------------------------------------------------
%
% lineno patch for math environment
%
%----------------------------------------------------------------------

\usepackage{lineno}
\newcommand*\patchAmsMathEnvironmentForLineno[1]{%
	\expandafter\let\csname old#1\expandafter\endcsname\csname #1\endcsname
	\expandafter\let\csname oldend#1\expandafter\endcsname\csname end#1\endcsname
	\renewenvironment{#1}%
	{\linenomath\csname old#1\endcsname}%
	{\csname oldend#1\endcsname\endlinenomath}}% 
\newcommand*\patchBothAmsMathEnvironmentsForLineno[1]{%
	\patchAmsMathEnvironmentForLineno{#1}%
	\patchAmsMathEnvironmentForLineno{#1*}}%
\AtBeginDocument{%
	\patchBothAmsMathEnvironmentsForLineno{equation}%
	\patchBothAmsMathEnvironmentsForLineno{align}%
	\patchBothAmsMathEnvironmentsForLineno{flalign}%
	\patchBothAmsMathEnvironmentsForLineno{alignat}%
	\patchBothAmsMathEnvironmentsForLineno{gather}%
	\patchBothAmsMathEnvironmentsForLineno{multline}%
}

%----------------------------------------------------------------------
%
% amsrefs and related
%
%----------------------------------------------------------------------

\usepackage[abbrev,msc-links,backrefs]{amsrefs}
%changes unwanted characters _, <, > etc. in DOI's,
%so that latex can deal with it
\usepackage{doi}

%overwrite the implementation from amsrefs to
%use the doi-macro from doi, which also includes
%hyperlinks
\renewcommand{\PrintDOI}[1]{\doi{#1}}

%overwrite the implementation from amsrefs
%suited for arxiv only
%just add the number in the eprint entryof \bib, i.e.,
%  eprint={1602.02501}

%----------------------------------------------------------------------
%
% Theorems
%
%----------------------------------------------------------------------

\newtheorem{theorem}{Theorem}
\newtheorem{conjecture}[theorem]{Conjecture}
\newtheorem{lemma}[theorem]{Lemma}

\newtheorem{claim}{Claim}
\newtheorem{subclaim}{Subclaim}[claim]
\newtheorem{definition}{Definition}

%\newtheorem{property}{Property}

%----------------------------------------------------------------------
%
% Alias
%
%----------------------------------------------------------------------

\newcommand{\pn}{{\rm pn}}

\newcommand{\D}{\mathcal{D}}

\newcommand{\Hcal}{\mathcal{H}}
\newcommand{\Lcal}{\mathcal{L}}
\newcommand{\Acal}{\mathcal{A}}

\newcommand{\floor}[1]{\lfloor #1 \rfloor}

\newcommand{\mydiff}{%
  \mathrel{\vbox{\offinterlineskip\ialign{%
    \hfil##\hfil\cr
    $\scriptscriptstyle\ast$\cr
    %\noalign{\kern0ex}
    $-$\cr
}}}}

\title{Gallai's path decomposition conjecture \\for triangle-free planar graphs}

\author{F. Botler\textsuperscript{1} \hspace{.5cm} A. Jim\'enez\textsuperscript{1} \hspace{.5cm} M. Sambinelli\textsuperscript{2}\\
	{\footnotesize \textsuperscript{1}CIMFAV, Facultad de Ingeniería}\vspace{-.2cm}\\
	{\footnotesize Universidad de Valpara\'iso}\\
	{\footnotesize \textsuperscript{2}Institute of Computing}\vspace{-.2cm}\\
	{\footnotesize University of Campinas}
	\footnote{
This research was partially supported by 
Millenium Nucleus Information and Coordination in Networks (ICM/FIC RC 130003).
F. Botler is partially supported by CONICYT/FONDECYT/POSTDOCTORADO 3170878.
A. Jim\'enez is partially supported by  CONICYT/FONDECYT/INICIACION 11170931.
  E-mails:
  fbotler@dii.uchile.cl (F. Botler),
  andrea.jimenez@uv.cl (A. Jim\'enez)
  msambinelli@ic.unicamp.br (M. Sambinelli).
}}

\shortdate
\yyyymmdddate
\settimeformat{ampmtime}
\date{\today, \currenttime}
\date{}
\begin{document}

\sloppy

%\linenumbers

\maketitle

\begin{abstract}
	A path decomposition of a graph \(G\) is a collection of edge-disjoint paths of \(G\) that covers the edge set of \(G\).
	Gallai (1968) conjectured that every connected graph on~\(n\) vertices admits a path decomposition of cardinality at most \(\lfloor (n+1)/2\rfloor\).
	Gallai's Conjecture has been verified for many classes of graphs.
	In particular, Lovász (1968) verified this conjecture for graphs with at most one vertex with even degree, and Pyber (1996) verified it for graphs in which every cycle contains a vertex with odd degree.
	Recently, Bonamy and Perrett (2016) verified Gallai's Conjecture for graphs with maximum degree at most \(5\), 
	and Botler~et al.~(2017) verified it for graphs with treewidth at most \(3\).
	In this paper, we verify Gallai's Conjecture for triangle-free planar graphs.

	\bigskip
	\noindent \textit{Keywords:} Graph, path, decomposition, triangle-free, planar, Gallai's Conjecture.
\end{abstract}

\section{Introduction}\label{sec:introduction}

All graphs considered here are finite and simple, i.e., contain a finite number of vertices and edges and has  neither loops nor multiple edges.
A \emph{decomposition} \(\D\) of a graph \(G\) is a collection of edge-disjoint subgraphs of \(G\) that covers all the edges of \(G\).
A decomposition \(\D\) is a \emph{path decomposition} if every element in \(\D\) is a path.
A path decomposition~\(\D\) of a graph~\(G\) is \emph{minimum} if for every path decomposition~\(\D'\) of~\(G\) we have \(|\D| \leq |\D'|\).
The cardinality of such a minimum path decomposition is called the \emph{path number} of \(G\) and it is denoted by \(\pn(G)\).
In 1968, Gallai proposed the following conjecture~(see~\cite{Lovasz68, Bondy14}).

\begin{conjecture}[Gallai, 1968]\label{conj:gallai}
The path number of a connected graph on \(n\) vertices is at most $\big\lfloor \frac{n+1}{2} \big\rfloor$.
\end{conjecture}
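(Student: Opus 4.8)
The plan is to prove the upper bound by induction on \(|E(G)|\), using the two cases already settled as the engine: Lovász's theorem that the conjecture holds when \(G\) has at most one vertex of even degree, and Pyber's theorem that it holds when every cycle of \(G\) contains a vertex of odd degree. First I would record the matching lower bound for orientation: if \(G\) has \(2k\) vertices of odd degree, then any path decomposition needs at least \(k\) paths, since the two endpoints of each nontrivial path are the only vertices with odd degree inside that path, and every odd-degree vertex of \(G\) must occur as an endpoint an odd number of times; summing incidences gives \(2\cdot(\#\text{paths})\ge 2k\). Thus the target \(\lfloor (n+1)/2\rfloor\) always dominates \(k\), and the whole difficulty lies in handling the edges that live on cycles through even-degree vertices.

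Next I would clear the routine reductions. A vertex of degree \(1\) is removed together with its incident edge \(e\); by induction \(G-e\) has a path decomposition of the required size, and since \(n\) drops by one the edge \(e\) can be appended as, or to, a path without exceeding \(\lfloor (n+1)/2\rfloor\). A vertex \(v\) of degree \(2\) is suppressed by replacing its two incident edges with a single edge joining its neighbours; a decomposition of the smaller graph lifts back by subdividing the path through \(v\), and the bookkeeping on \(n\) again absorbs the change. These steps let me assume \(\delta(G)\ge 3\), so in particular every non-endpoint vertex sits on a cycle.

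By Pyber's theorem I may assume \(G\) contains a cycle all of whose vertices have even degree; call the subgraph spanned by all such cycles the \emph{even core} \(H\). On each block of \(H\) I would build an Eulerian-type traversal and then open it into paths, spending for each opened cycle one endpoint that is either an odd-degree vertex of \(G\) or a cut vertex shared with \(G-H\). Where an odd-degree anchor is available the opening is essentially free — this is the Pyber situation locally — and the delicate case is an even, \(2\)-edge-connected block carrying no odd-degree vertex, where each opening must create two \emph{new} path endpoints inside \(H\). The strategy is then to charge each surplus path injectively to a distinct vertex of \(H\) and to check that the total charge never exceeds the slack between \(k\) and \(\lfloor (n+1)/2\rfloor\).

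Making that charging global rather than block-local is exactly where I expect the argument to stall, and is the reason the conjecture remains open in general: a large \(2\)-connected Eulerian subgraph on an even number of vertices can force many cycle-openings, and without extra structure there is no evident injection from the surplus paths to the vertices of \(G\). It is precisely to tame this even core that every known partial result imposes a further hypothesis — bounded maximum degree, small treewidth, or, as the present paper does, the combination of triangle-freeness and planarity — each of which limits how the even cycles can interlock and thereby lets the charging succeed. My plan, lacking such a hypothesis for arbitrary connected graphs, reduces the problem to this even-core obstruction but does not resolve it.
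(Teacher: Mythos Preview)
The statement you are trying to prove is Gallai's \emph{conjecture}, which remains open in general; the paper does not prove it and does not claim to. What the paper establishes is Theorem~\ref{theo:main}, the special case of triangle-free planar graphs, via a minimal-counterexample argument built on feasible reducing schemes. So there is no ``paper's own proof'' of this statement to compare against, and you yourself concede in the last paragraph that your plan does not close.

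Even the parts you label routine have real gaps. Your degree-\(1\) reduction removes a pendant vertex \(v\) with neighbour \(u\), applies induction to the graph on \(n-1\) vertices, and then reattaches the edge \(uv\). But induction gives only \(\lfloor n/2\rfloor\) paths, and when \(n\) is even this already equals \(\lfloor (n+1)/2\rfloor\); you have no budget for a new path, yet nothing guarantees that some path in the decomposition ends at \(u\). Your degree-\(2\) suppression replaces the two edges at \(v\) by a single edge joining its neighbours \(a,b\), but if \(ab\) is already present this creates a multi-edge, and the inductive hypothesis is stated for simple graphs. Neither issue is fatal in isolation (both can be patched with more care), but they are not the one-line reductions you present, and they illustrate why even the ``easy'' boundary of this problem resists a clean induction on \(|E(G)|\).

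The substantive obstacle is exactly the one you identify: once Pyber's theorem forces an all-even cycle, you have to open cycles in an Eulerian subgraph and charge the resulting endpoints to vertices, and there is no known injection that works in general. Your ``even core'' sketch is a reasonable description of where the difficulty lives, but it is a restatement of the problem rather than progress toward a proof. The paper's contribution is precisely to supply extra structure (planarity plus triangle-freeness) that makes a reducing-scheme argument go through; absent such a hypothesis, the conjecture is still open.
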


Lov\'asz~\cite{Lovasz68} verified Conjecture~\ref{conj:gallai} for graphs that have at most one vertex with even degree.
Pyber~\cite{Pyber96} extended Lov\'asz's result by proving that Conjecture~\ref{conj:gallai} holds for graphs whose cycles have vertices  
with odd degree, and, in 2005, Fan~\cite{Fan05} extended it even further.
Following another direction, 
Favaron and Kouider~\cite{FavaronKouider88} verified
Conjecture~\ref{conj:gallai} for Eulerian graphs with maximum degree at most \(4\),
Jim\'enez and Wakabayashi~\cite{JimenezWakabayashi2017} verified it for a family of triangle-free graphs,
and Botler and Jim\'enez~\cite{BotlerJimenez2015+} verified it for a family of even regular graphs with a high girth condition.
Also, Geng, Fang, and Li~\cite{GengFangLi15} verified Conjecture~\ref{conj:gallai} for
maximal outerplanar graphs and \(2\)-connected outerplanar graphs,
and Bonamy and Perrett~\cite{BonamyPerrett16+} verified it for 
graphs with maximum degree at most \(5\).

More recently, Botler~et~al.~\cite{BoSaCoLe} verified Conjecture~\ref{conj:gallai} for graphs with treewidth at most~\(3\),
by proving that a partial \(3\)-tree with \(n\) vertices either has path number at most~\(\lfloor n/2\rfloor\), which are called \emph{Gallai graphs},
or is one of the two exceptions (\(K_3\) and \(K_5-e\)).
They also proved that graphs with maximum degree at most \(4\) are either Gallai graphs,
or one of the three exceptions (\(K_3\), \(K_5-e\), and \(K_5\)).
In this paper, we prove that every triangle-free planar graph is a Gallai graph, with no exceptions (see Theorem~\ref{theo:main}).
This verifies Gallai's Conjecture for these graphs.

Botler et al. introduced the concept of \emph{reducing subgraph},
which plays an important role in their proof.
A reducing subgraph is a forbidden structure in a minimal counterexample for the statement of their main theorem.
Here we introduce the concept of \emph{feasible reducing scheme}, a generalization of reducing subgraph.
We follow the same strategy as in~\cite{BoSaCoLe}: 
we choose a minimal counterexample
and explore the structure around some special vertices in it, which are called \emph{terminals}.
Our proof consists of eleven claims that provide a strict structure for our counterexample.
In particular, we prove that a minimal counterexample for Theorem~\ref{theo:main} must be \(2\)-connected (see Claims~\ref{claim:no-useful-cut-vertex} and~\ref{claim:no-terminal-degree-1}), and every pair of its terminals are separated by a separator with two vertices (see Claim~\ref{claim:terminal-separator}).

This work is organized as follows.
In Section~\ref{sec:notation}, we establish some basic notations, and
present two auxiliary results.
In Section~\ref{sec:reducing}, we define feasible reducing schemes,
and present some tools that allow us to deal with them.
In Section~\ref{sec:main-result}, we settle Conjecture~\ref{conj:gallai} for triangle-free planar graphs.
In Section~\ref{sec:concluding}, we present some concluding remarks.

\section{Notation and auxiliary results}\label{sec:notation}

The basic terminology and notation used in this paper are standard (see, e.g.~\cite{Di10}).
Given a graph \(G\), we denote its vertex set by \(V(G)\) and its edge set by \(E(G)\).
The set of neighbors of a vertex \(v\) in a graph \(G\) is denoted by \(N_G(v)\) and its degree by \(d_G(v)\).
When \(G\) is clear from the context, we simply write \(N(v)\) and  \(d(v)\).
Since \(G\) is simple, we always have \(d_G(v) = |N_G(v)|\).

A graph \(H\) is a \emph{subgraph} of a graph \(G\), denoted by \(H \subseteq G\), if \(V(H) \subseteq V(G)\) and \(E(H) \subseteq E(G)\).
Given a set of vertices \(X \subseteq V(G)\), we say that \(H\) is the subgraph of \(G\) \emph{induced by \(X\)}, 
denoted by \(G[X]\), if  \(V(H) = X\) and \(E(H) = \{xy \in E(G) \colon x,y \in X\}\).
Given a set of edges \(Y \subseteq E(G)\), we say that \(H\) is the subgraph  of \(G\) \emph{induced by \(Y\)}, 
if \(E(H) = Y\) and \(V(H) = \{u \in V(G) \colon uv \in Y\}\).
Given two (not necessarily disjoint) graphs \(G\) and \(H\), we define the graph \(G + H\) 
by \(V(G + H) = V(G) \cup V(H)\) and \(E(G + H) = E(G) \cup E(H)\).
When \(H\) is an edge \(e\), we simply write \(G + e\) to denote \(G+H\).
Given \(X \subseteq V(G)\), we define \(G - X=G[V(G) \setminus X]\).
In the case that \(X = \{v\}\), we simply write \(G - v\).
Given a set~\(Y\) of edges, we define the graphs \(G - Y=(V(G),E(G)\setminus Y)\),  \(G \mydiff Y=(V(G)\setminus I,E(G)\setminus Y)\), where $I$ is the set of isolated vertices of $G-Y$, and \(G + Y = G + \sum_{e \in Y} e\).
As before, in the case that \(Y = \{e\}\), we simply write \(G - e\) or \(G \mydiff e\).

A \emph{path} \(P\) in a graph \(G\) is a sequence \(v_0v_1\cdots v_\ell\) of distinct vertices in \(V(G)\) 
such that~\(v_iv_{i+1}\in E(G)\), for~\(i=0,\ldots,\ell-1\).
We say that \(v_0\) and \(v_\ell\) are the \emph{end vertices} of \(P\),
and that \(P\) \emph{joins} \(v_0\) and \(v_\ell\).
When it is convenient, we consider a path 
as the subgraph of~\(G\) induced by the set of edges \(\{v_iv_{i+1} \colon i = 0, \ldots, \ell - 1\}\).
The length of a path is defined as its number of edges.
Given two vertices $u,v \in V(G)$, we denote by \(dist_{G}(u,v)\) the minimum length of a path that joins
$u$ and $v$.

Given a path decomposition \(\D\) of a graph \(G\)
and a vertex \(v\in V(G)\), we denote by \(\D(v)\) the number of paths in \(\D\) that have \(v\) as an end vertex.
It is not hard to check that \(\D(v) \equiv d(v) \pmod{2}\).
In particular, if \(d_G(v)\) is odd, we have \(\D(v) \geq 1\),
for any path decomposition \(\D\) of \(G\).

Figures in this paper are depicted as follows (see Figure~\ref{fig:example}).
We denote vertices by circles or squares.
A circle illustrates a general vertex while a square illustrates a vertex where all edges incidents to it are present in the figure.
Straight and curved lines are used to illustrate simple edges, while snaked lines are used to illustrate paths (possibly with internal vertices).

\begin{figure}[h]
	\floatbox[{\capbeside\thisfloatsetup{capbesideposition={right,center},capbesidewidth=10cm}}]{figure}[\FBwidth]
	{\caption{The circles $a$ and $d$ illustrate general vertices, while the squares \(b\) and \(c\) illustrate vertices for which all edges
	 incident to them are present in the figure (this is useful for pictures in which just a fraction of the graph appears). 
	 The lines between the pairs of vertices \((a, c), (c, b), (d, b), (a, d)\) illustrate simple edges between these pair of vertices, 
	 while the snaked lines between the pairs \((a, b), (c, d)\) illustrate paths joining these pair of vertices.
		}\label{fig:example}}
	{\scalebox{.8}{\begin{tikzpicture}[scale = 0.7]

	\tikzstyle{every circle node} = [draw];

	\node (x1) [black vertex] at (90:3) {};
	\node (x2) [squared black vertex] at (210:3) {};
	\node (x3) [squared black vertex] at (0,0) {};
	\node (x4) [black vertex] at (-30:3) {};
	\node () [] at (0:4) {};
	
	\node (label_a) [] at (90:3.5) {$a$};
	\node (label_b) [] at (210:3.6) {$b$};
	\node (label_c) [] at ($(x3)+(45:.6)$) {$c$};
	\node (label_d) [] at (-30:3.6) {$d$};

        \draw[edge] (x3) -- (x1);
        \draw[edge, short snake] (x1) to [bend right] (x2);
        \draw[edge] (x2) -- (x3);

	\draw[edge] (x2) to [bend right] (x4);
	\draw[edge] (x1) to [bend left] (x4);
        \draw[edge, short snake] (x4) --(x3);	
\end{tikzpicture}}}
\end{figure}
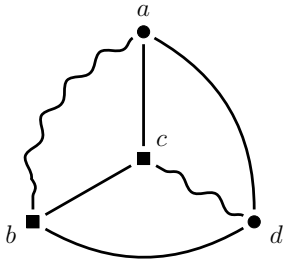

In this paper we also use the following two auxiliary results.
The next lemma~\cite[Lemma 2.3(ii)]{BoSaCoLe} allows us to decompose, under some constraints, an
edge-disjoint union of a cycle and a path into two paths.

\begin{lemma}\label{lemma:cycle+path}
	If \(G\) is a connected graph that admits a decomposition into a cycle \(C\) of length at most \(5\)
	and a path that contains at most three chords of \(C\), then \(\pn(G) = 2\).
\end{lemma}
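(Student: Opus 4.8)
The plan is to prove the lower bound $\pn(G)\ge 2$ for free — since $G$ contains the cycle $C$ it is not a forest, hence not a single path — and then exhibit an explicit decomposition of $G$ into two paths. Write $C=c_0c_1\cdots c_{\ell-1}c_0$ with $3\le \ell\le 5$, let $P$ be the path with $\{C,P\}$ a decomposition of $G$, and let $s,t$ be the end vertices of $P$. If $E(P)=\emptyset$ then $G=C$ and we are done, so we may assume $E(P)\neq\emptyset$; since $G$ is connected this forces $V(P)\cap V(C)\neq\emptyset$.

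The basic construction I would use is to split the cycle at a \emph{pivot}. Choose a vertex $z\in V(P)\cap V(C)$ and let $P^-$ and $P^+$ be the two subpaths into which $z$ splits $P$, containing $s$ and $t$ respectively (one of them may be trivial). Write $E(C)$ as the disjoint union of the edge sets of two arcs $A_1,A_2$ joining $z$ to a common second vertex, and set $P_1=P^-+A_1$ and $P_2=A_2+P^+$. Then $\{P_1,P_2\}$ automatically covers $E(G)$ and is edge-disjoint, so the whole problem reduces to choosing $z$ and the arc partition $\{A_1,A_2\}$ so that $P_1$ and $P_2$ are genuine (simple) paths. If $z$ is taken to be the vertex of $V(P)\cap V(C)$ closest to $s$ along $P$, then $P^-$ is internally disjoint from $C$, so $P_1$ is a path for \emph{every} choice of $A_1$; the only remaining constraint is that $A_2$ avoid those vertices of $C\setminus\{z\}$ that are visited by $P^+$. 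In particular, when $P$ is internally disjoint from $C$ — the main case — any split of $C$ at $z$ into two nontrivial arcs works, and we are finished.

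The remaining work, and the technical heart of the lemma, is the case in which $P$ returns to $C$ after the pivot. Here I would first make the case analysis finite by a contraction: replace each maximal subpath of $P$ of length at least two whose internal vertices all lie outside $V(C)$ by a path of length at most two with the same end vertices (through a new vertex if needed to keep the graph simple). This is legitimate because $P$ never revisits a vertex of $C$, so no loop arises, and any decomposition of the contracted graph into two paths lifts back to $G$ by undoing the replacements. After this step the contracted graph has boundedly many edges beyond those of $C$, because $\ell\le 5$ and $P$ carries at most three chords of $C$; consequently the pattern recording where $P$ enters and leaves $C$, which chords of $C$ appear on $P$, and how they interlace, falls into a short explicit list. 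For each configuration I would pick a pivot — the first $C$-vertex from $s$, the last from $t$, or an internal one, as needed — and an arc partition of $C$, and in the few stubborn configurations additionally move a single chord or a short detour to the appropriate side before forming $P_1$ and $P_2$; a direct check then confirms that both are paths.

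The hard part will be purely the bookkeeping in the densest cases, namely $\ell=5$ with three chords of $C$ lying on $P$, where $P$ may pass through all five vertices of $C$: one must verify that among the finitely many choices of pivot and arc partition some choice always produces two simple paths. The two hypotheses, $\ell\le 5$ and at most three chords, are exactly what confine this verification to a bounded number of cases.
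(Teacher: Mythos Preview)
The paper does not prove this lemma: it is quoted as Lemma~2.3(ii) of~\cite{BoSaCoLe} and used here as a black box, so there is no in-paper proof to compare yours against.

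As for your proposal itself, the framework is sound. The lower bound is immediate, the pivot-and-split construction is the natural idea, and contracting the maximal external subpaths of $P$ down to length~$\le 2$ legitimately reduces the upper bound to a finite verification (the lift back to $G$ is clean because those internal vertices appear nowhere else). But the entire substance of the lemma \emph{is} that finite verification, and you do not carry it out. In the densest case you yourself flag ($\ell=5$, three chords on $P$, $P$ meeting all five cycle vertices), every choice of pivot $z$ and arc split $\{A_1,A_2\}$ can fail under the basic construction, since $P^+$ may hit every vertex of $C\setminus\{z\}$ and hence every candidate for the far endpoint $w$ of $A_2$. The ad hoc repairs you allude to---``move a single chord or a short detour to the appropriate side''---are exactly the content that has to be exhibited and checked configuration by configuration. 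Without that, what you have written is a correct outline of the shape the argument should take, not a proof.
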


The following lemma is a result of Fan~\cite[Lemma 3.4]{Fan05}
that is used to prove Claim~\ref{claim:even-degree-neighbors}. 

\begin{lemma}\label{lemma:fan2005}
	Let $u$ be a vertex of a graph $G$ and $G' = G - uv$, where $v \in N_G(u)$.
	Let $\D'$ be a path decomposition of $G'$.
	If \(\D'(v) > |\{w\in N_{G'}(u)\colon \D'(w) = 0\}|\),
	then there is a path decomposition \(\D\) of \(G\)
	such that \(|\D| = |\D'|\).
\end{lemma}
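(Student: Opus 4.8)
The plan is to start from the given path decomposition $\D'$ of $G'=G-uv$ and to massage it into a path decomposition $\D$ of $G$ with $|\D|=|\D'|$, by attaching the edge $uv$ to a (possibly rerouted) path of $\D'$. First, since $\D'(v)>|\{w\in N_{G'}(u)\colon \D'(w)=0\}|\ge 0$, we have $\D'(v)\ge 1$, so some $P\in\D'$ has $v$ as an end vertex. If some such $P$ also satisfies $u\notin V(P)$, then $P+uv$ is a path and $\D:=(\D'\setminus\{P\})\cup\{P+uv\}$ is the desired decomposition. Hence we may assume from now on that \emph{every} path of $\D'$ ending at $v$ contains $u$.

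The key combinatorial observation comes next. For each path $P\in\D'$ ending at $v$, walk along $P$ from $v$ until the first occurrence of $u$, and let $w_P$ be the vertex immediately preceding that occurrence; since $uv\notin E(G')$ we have $w_P\ne v$, so $w_P\in N_{G'}(u)$ and the edge $w_Pu$ lies on $P$. Because distinct paths of $\D'$ are edge-disjoint, the map $P\mapsto w_P$ is injective on the set of paths ending at $v$, so these paths yield $\D'(v)$ distinct vertices of $N_{G'}(u)$. As $\D'(v)>|\{w\in N_{G'}(u)\colon \D'(w)=0\}|$, at least one of them, say $w=w_P$ for a suitable $P$, satisfies $\D'(w)\ge 1$; note also that $w$ is an internal vertex of $P$, so any path ending at $w$ is different from $P$.

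Now comes the recombination. Write $P$ as the concatenation of its subpath $A$ from $v$ to $w$, the edge $wu$, and its subpath $B$ from $u$ to the other end of $P$; thus $E(P)=E(A)\cup\{wu\}\cup E(B)$, the paths $A$ and $B$ are vertex-disjoint, $v\in V(A)$, and $u\in V(B)$. Suppose we can choose the pair $(P,w)$ above together with a path $Q\in\D'$ ending at $w$ with $u\notin V(Q)$. Then $Q+wu$ is a path (its last vertex $u$ is new), and $B+uv+A$ is a path (traverse $B$ to $u$, then the edge $uv$, then $A$ from $v$ to $w$; this is simple because $A,B$ are vertex-disjoint, $u\notin V(A)$, and $v\notin V(B)$). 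Replacing $\{P,Q\}$ by $\{B+uv+A,\ Q+wu\}$ in $\D'$ produces a path decomposition $\D$ of $G$, since the two new paths cover exactly $E(P)\cup E(Q)\cup\{uv\}$, and clearly $|\D|=|\D'|$.

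The remaining point — which I expect to be the main obstacle — is to guarantee a suitable pair $(P,Q)$ in the previous step, i.e.\ to rule out being stuck in the situation where, for every path $P$ ending at $v$ with $\D'(w_P)\ge 1$, every path ending at $w_P$ passes through $u$. I plan to handle this by an extremal choice: take $P$ to be a path ending at $v$ with $\D'(w_P)\ge 1$ minimizing the length of the subpath of $P$ between $v$ and $u$, pick a path $Q$ ending at $w=w_P$, and, if $u\in V(Q)$, use $Q$ to reroute $P$ just before it reaches $u$, obtaining a path that still ends at $v$, still has its associated $u$-neighbour outside $\{\,w'\in N_{G'}(u)\colon\D'(w')=0\,\}$, but with a strictly shorter $v$–$u$ subpath, contradicting the choice; if a clean one-step contradiction is not available, iterate the rerouting through $N_{G'}(u)$, which is finite so the process terminates. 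Making these reroutings yield genuine (simple) paths, rather than walks that repeat vertices, is the delicate part, and is where care will be needed.
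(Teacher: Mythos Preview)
The paper does not prove this lemma; it is quoted from Fan (Lemma~3.4 in~\cite{Fan05}) and invoked as a black box in the proof of Claim~\ref{claim:even-degree-neighbors}. There is therefore no in-paper proof to compare your attempt against.

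On the merits of your argument: the reduction to the case in which every path of $\D'$ ending at $v$ passes through $u$, the injection $P\mapsto w_P$ into $N_{G'}(u)$, and the recombination $\{P,Q\}\mapsto\{B+uv+A,\ Q+wu\}$ when some $Q$ ending at $w=w_P$ avoids $u$ are all correct and cleanly written. The difficulty is exactly where you place it, but your proposed extremal fix does not go through as stated. If $u\in V(Q)$, then any ``rerouting of $P$ just before it reaches $u$'' that still starts at $v$ and passes through $w$ must first traverse $A$ from $v$ to $w$ and then reach $u$ along some subpath of length at least $1$; hence the new $v$--$u$ subpath has length at least $|E(A)|+1$, which is \emph{not} shorter than before, so the promised strict decrease is unavailable. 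The fallback ``iterate the rerouting through $N_{G'}(u)$, which is finite so the process terminates'' is too vague: finiteness gives termination of \emph{some} process, but you have not specified one whose terminal state produces a usable path, nor controlled the simplicity of the intermediate walks (the point you yourself flag). This residual case is precisely the substance of Fan's lemma, and your sketch does not yet supply an argument for it.
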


\section{Reducing schemes and reducing subgraphs}\label{sec:reducing}

In this section, we generalize the concept of reducing subgraph presented in~\cite{BoSaCoLe}.
This generalization combines the reducing subgraph,
which allows us to reduce the original graph by removing some of its vertices and later compensate them with the addition of not many paths,
with special paths that can either be used to extend paths in a decomposition of the reduced graph,
or to replace edges that were obtained from liftings.
Feasible Reducing schemes (see Definition~\ref{def:feasible}) and reducing subgraphs are forbidden structures in a minimal counterexample for Theorem~\ref{theo:main} (see Claim~\ref{claim:no-reducing}).

Let \(G\) be a graph, and let \(P\) be a path in \(G\) that joins two vertices \(u\) and \(v\).
We say that the edge \(uv\), denoted by \(e_P\), is \emph{parallel} to \(P\), and that $P$ is parallel to $e_P$.
Note that \(uv\) is not required to be an edge of \(G\) and also that $uv$ could be $P$ itself.
Given a set \(\mathcal{P}\) of paths in  \(G\),
we denote by \(E_\mathcal{P}\) the set \(\{e_P\colon P\in\mathcal{P}\}\)
of edges parallel to the paths in \(\mathcal{P}\).

A \emph{reducing scheme} for a graph \(G\) is a triple \(\mathcal{H}=(H,\mathcal{A},\mathcal{L})\)
where \(\{H\}\cup\mathcal{A}\cup\mathcal{L}\) is a decomposition of a subgraph \(R_\mathcal{H}\) of \(G\),
\(\mathcal{A}\cup\mathcal{L}\) is a set of paths
and \(E_\mathcal{L}\) has no edge of~\(G-E(R_\mathcal{H})\).
We denote by \(I_\mathcal{H}\) be the set of isolated vertices of~\(G-E(R_\mathcal{H})+E_\mathcal{L}\),
and we say that the graph  \(G_\mathcal{H}=G-E(R_\mathcal{H})+E_\mathcal{L}-I_\mathcal{H}\) is the \emph{reduced graph} of \(G\) through \(\mathcal{H}\).
Note that \(G_\Hcal\) is obtained from \(G\) by removing the edges of \(H\) and of the paths in \(\Acal\),
replacing the paths in \(\Lcal\) by their parallel edges,
and removing the isolated vertices.
Conversely, \(G\) can be obtained from \(G_\Hcal\) by a series of subdivisions of edges in \(E_\Lcal\), 
identifications of some of the new vertices, and by restoring \(H\) and the paths in \(\Acal\).
The following definition is the central concept of this work.

\begin{definition}\label{def:feasible}
A reducing scheme \(\mathcal{H}=(H,\mathcal{A}, \mathcal{L})\) for a graph \(G\)
is a \emph{feasible reducing scheme (FRS)} if there is a positive integer \(r\) such that
\begin{enumerate}[\rm (i)]
\item\label{def:feasible1}	\(\pn(H) \leq r\);
\item\label{def:feasible2}	\(|I_\mathcal{H}| \geq 2r\), that is, \(G_\mathcal{H}\) has at most \(|V(G)|-2r\) vertices;  
\item\label{def:feasible3}	\(\mathcal{A}\) is a set of vertex-disjoint paths such that each \(P \in\mathcal{A}\) has an end vertex \(v\) 
				with odd degree in \(G_\mathcal{H}\) and \(V(P) \setminus \{v\} \subseteq I_\mathcal{H}\);
\item\label{def:feasible4}	for each path \(P\in\mathcal{L}\) with end vertices \(u\) and \(v\),
				we have \(V(P)\setminus\{u,v\}\subseteq I_\mathcal{H}\). Moreover, for any pair of paths in 
				$\mathcal{L}$ their parallel edges are distinct;

\item\label{def:feasible5}	no vertex of \(I_\mathcal{H}\) is contained in more than one path of \(\mathcal{A}\cup\mathcal{L}\).
\end{enumerate}
In the case that \(\mathcal{H}=(H,\mathcal{A}, \mathcal{L})\) is a feasible reducing scheme with
\(\mathcal{A} = \mathcal{L}=\emptyset\), we say that~\(H\) is a \emph{reducing subgraph} (or an \emph{\(r\)-reducing subgraph}) of~\(G\).
\end{definition}

The concept of reducing subgraph presented here coincides with the the one in~\cite{BoSaCoLe},
i.e., a reducing subgraph is a subgraph with path number at most \(r\) such that the removal of its edges leaves at least \(2r\) vertices isolated.
Feasible reducing schemes extend reducing subgraphs 
by allowing some of the edges incident to vertices in \(I_\Hcal\) not to be contained in~\(H\).
These edges are covered either by the paths in \(\Acal\),
which are concatenated to paths of a path decomposition \(\D\) of \(G_\Hcal\);
or by paths in \(\Lcal\), 
which replace their parallel edges in the paths of \(\D\).
Hence, unlike reducing subgraphs, reducing schemes may explore the structure of the reduced graph.

Note that, by Definitions~\ref{def:feasible}(\ref{def:feasible4}) and~\ref{def:feasible}(\ref{def:feasible5}), 
if \(\Hcal\) is an FRS for \(G\), then \(G\) can be obtained from \(G_\Hcal\) as above,
but with no identification of vertices, i.e., by a series of subdivision of edges in \(E_\Lcal\), 
and by restoring \(H\) and the paths in \(\Acal\).

As in~\cite{BoSaCoLe}, we refer to a graph~\(G\) on \(n\) vertices as~\emph{Gallai graph} if \(\pn(G) \leq \floor{n/2}\).
The following lemma essentially proves that a minimal counterexample to Conjecture~\ref{conj:gallai} does not
admit an FRS. 

\begin{lemma}\label{lemma:reducing}
	If \(G\) is a graph that admits an FRS \(\mathcal{H} = (H,\mathcal{A},\mathcal{L})\),
	then \(\pn(G) \leq \pn(G_\mathcal{H}) + \pn(H)\).
	In particular, if \(G_\mathcal{H}\) is a Gallai graph, then \(G\) is a Gallai graph.
\end{lemma}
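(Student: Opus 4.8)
The plan is to convert a minimum path decomposition of the reduced graph \(G_\Hcal\) into a path decomposition of \(G\) by undoing, one type at a time, the three operations that turned \(G\) into \(G_\Hcal\): the lifting of the paths in \(\Lcal\) onto their parallel edges, the deletion of the paths in \(\Acal\), and the removal of \(H\). Fix a minimum path decomposition \(\D'\) of \(G_\Hcal\), so that \(|\D'|=\pn(G_\Hcal)\). Two bookkeeping facts will be used throughout. Since \(\set{H}\cup\Acal\cup\Lcal\) decomposes \(R_\Hcal\), the set \(E(R_\Hcal)\) is the disjoint union of \(E(H)\), \(\bigcup_{P\in\Acal}E(P)\) and \(\bigcup_{P\in\Lcal}E(P)\). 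And since \(E_\Lcal\) contains no edge of \(G-E(R_\Hcal)\) while, by Definition~\ref{def:feasible}(\ref{def:feasible4}), the paths in \(\Lcal\) have pairwise distinct parallel edges, the edge set of \(G_\Hcal\) is the disjoint union of \(E(G)\setminus E(R_\Hcal)\) and \(E_\Lcal\), with each \(e_P\), \(P\in\Lcal\), occurring exactly once in \(G_\Hcal\).

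First I would undo the liftings. For each \(P\in\Lcal\), the parallel edge \(e_P\) lies in \(G_\Hcal\), hence on exactly one path of \(\D'\); in that path, replace the edge \(e_P\) by \(P\) itself. This is legitimate because the internal vertices of \(P\) lie in \(I_\Hcal\) by Definition~\ref{def:feasible}(\ref{def:feasible4}) and hence appear on no path of \(\D'\), and because Definition~\ref{def:feasible}(\ref{def:feasible5}) makes the sets \(V(P)\cap I_\Hcal\) pairwise disjoint over \(P\in\Acal\cup\Lcal\), so the substitutions for distinct paths do not interfere. The result is a family \(\D''\) of edge-disjoint paths with \(|\D''|=|\D'|\), with the same pairs of end vertices as the corresponding paths of \(\D'\), covering exactly \(\big(E(G)\setminus E(R_\Hcal)\big)\cup\bigcup_{P\in\Lcal}E(P)\).

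Next I would reabsorb \(\Acal\). Let \(P\in\Acal\) with distinguished end vertex \(v\); by Definition~\ref{def:feasible}(\ref{def:feasible3}) the degree of \(v\) in \(G_\Hcal\) is odd, so \(\D'(v)\geq1\), and since the previous step preserved end vertices some path \(Q\in\D''\) ends at \(v\). Concatenate \(P\) to \(Q\) at \(v\): the outcome is again a path, because by Definitions~\ref{def:feasible}(\ref{def:feasible3}) and~\ref{def:feasible}(\ref{def:feasible5}) the vertices of \(V(P)\setminus\set{v}\) lie in \(I_\Hcal\) and appear on no path of \(\D''\). The paths in \(\Acal\) are vertex-disjoint, so their distinguished vertices are distinct and these concatenations can be performed simultaneously; moreover a distinguished vertex has positive degree in \(G_\Hcal\) and hence is not in \(I_\Hcal\), so no concatenation is applied to a vertex produced by an earlier one. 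By the disjointness recorded above, the edges of the paths in \(\Acal\) were not yet covered, so we obtain a family \(\D_1\) of edge-disjoint paths with \(|\D_1|=\pn(G_\Hcal)\) that covers \(\big(E(G)\setminus E(R_\Hcal)\big)\cup\big(E(R_\Hcal)\setminus E(H)\big)=E(G)\setminus E(H)\).

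Finally I would adjoin a minimum path decomposition \(\D_H\) of \(H\), which covers \(E(H)\) and has \(|\D_H|=\pn(H)\); then \(\D_1\cup\D_H\) is a path decomposition of \(G\) of cardinality \(\pn(G_\Hcal)+\pn(H)\), proving the inequality. For the last assertion, if \(G_\Hcal\) is a Gallai graph then, using Definition~\ref{def:feasible}(\ref{def:feasible1}) and~\ref{def:feasible}(\ref{def:feasible2}),
\[
\pn(G)\leq\pn(G_\Hcal)+\pn(H)\leq\floor{|V(G_\Hcal)|/2}+r\leq\floor{(|V(G)|-2r)/2}+r=\floor{|V(G)|/2},
\]
so \(G\) is a Gallai graph. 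I expect the main obstacle to be purely organisational: verifying that the three modifications are mutually consistent — that no edge is covered twice and that each modified object stays a single path — and conditions~(\ref{def:feasible3})--(\ref{def:feasible5}) of Definition~\ref{def:feasible} are exactly what guarantees this, so the proof should reduce to a disciplined unwinding of those conditions rather than anything subtle.
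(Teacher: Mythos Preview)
Your proposal is correct and follows essentially the same approach as the paper: take a minimum path decomposition of \(G_\Hcal\), replace each parallel edge \(e_P\) by the corresponding path \(P\in\Lcal\), append each path of \(\Acal\) at its distinguished odd-degree end vertex, and finally adjoin a minimum path decomposition of \(H\). The only cosmetic difference is that the paper performs the \(\Lcal\)-substitutions and \(\Acal\)-concatenations in a single pass (defining \(P'=P-E_\Lcal+\sum_{R\in\Lcal_P\cup\Acal_P}R\) for each \(P\) in the decomposition and then verifying \(P'\) is a path), whereas you separate these into two stages; the verifications invoked---conditions~(\ref{def:feasible3}), (\ref{def:feasible4}), (\ref{def:feasible5}) of Definition~\ref{def:feasible} to ensure no vertex is repeated and no edge is covered twice---are identical.
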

\begin{proof}
	Let \(G\) and \(\mathcal{H}\) be as in the statement and let \(\D\) be a minimum path decomposition of \(G_\mathcal{H}\).
	For each path \(A\in\mathcal{A}\), let \(v_A\) be the end vertex of \(A\) with odd degree in \(G_\mathcal{H}\),
	and let \(P_A\in\D\) be a path in \(\D\) with end vertex~\(v_A\)
	(if there are more than one choice for \(P_A\), we choose one arbitrarily).
    For each \(P\in\D\), we define \(\mathcal{A}_{P} = \{A\in\mathcal{A}\colon P_A = P\}\)
    and let \(\mathcal{L}_{P}\) be the set of paths in \(\mathcal{L}\) that are
	parallel to some edge of \(P\). 
	Note that \(|\Acal_P| \leq 2\), for every \(P\in\D\), and every path in \(\Acal\cup\Lcal\) is contained in \(\Acal_P\cup\Lcal_P\),
	for precisely one element \(P\) of \(\D\).
	Put \(P' = P - E_\mathcal{L} + \sum_{R\in\mathcal{L}_{P}\cup\mathcal{A}_P} R\).
	We claim that \(P'\) is a path.
        
        First note that $P'$ is connected since each edge $uv \in E_\mathcal{L}$ deleted from $P$
        is replaced by the path in $\mathcal{L}_{P}$ with end vertices $u$ and $v$.
        Now, we show that the degree of every vertex in $P'$ is at most \(2\). 
        Let $v$ be a vertex of $P$ such that $v \in V(R)$ for some
        $R\in \mathcal{A}_P \cup \mathcal{L}_{P}$. 
        By Definition~\ref{def:feasible}(\ref{def:feasible3}) and~\ref{def:feasible}(\ref{def:feasible4}), the vertex $v$ is an end vertex of $R$. 
        By Definition~\ref{def:feasible}(\ref{def:feasible3}), paths in $\mathcal{A}_P$ are vertex disjoint, 
        and by Definition~\ref{def:feasible}(\ref{def:feasible4}), paths in $\mathcal{L}_{P}$ are parallel to distinct edges.
        This implies that if $v$ is an end vertex of $P$, then there is at most one path in $\mathcal{A}_P$
        and one path in $\mathcal{L}_{P}$ with end vertex $v$, in which case the degree of $v$ in $P'$ would increase by at most \(1\) with respect to the degree in $P$, 
        and hence it is at most \(2\) (the path in $\mathcal{A}_P$ contributes with \(1\), and the path in $\mathcal{L}_{P}$ contributes with \(0\) since its parallel edge is deleted). 
        Moreover, if $v$ is an internal vertex of $P$, there are at most 
        two paths in $\mathcal{L}_{P}$ with end vertex $v$  (and no path in $\mathcal{A}_P$),
        and hence \(d_{P'}(v) = d_P(v)\). 
        Now, we consider a vertex $v$ in $V(P')\setminus V(P)$,
        i.e., $v$ is in a path $R$ of either $\mathcal{A}_P$ or $\mathcal{L}_{P}$. 
        In both cases, by Definition~\ref{def:feasible}(\ref{def:feasible5}), 
        we have \(d_{P'}(v) = d_R(v)\),
        and hence \(d_{P'}(v)\leq 2\). 
        In order to complete the proof we need to show that $P'$ is not a cycle. 
        Suppose that the end vertex $v$ of $P$ does not have degree \(1\) in~$P'$.
        It implies that there is a path $A$ in $\mathcal{A}_P$ with end vertex $v$. 
        By Definition~\ref{def:feasible}(\ref{def:feasible3}) and~\ref{def:feasible}(\ref{def:feasible5}),
        the other end vertex $u$ of $A$ has degree \(1\) in $P'$.
        
	We conclude that \(\D'=\{P'\colon P\in\D\}\) is a path decomposition of \(G-E(H)\) such that \(|\D'|=|\D|\),
	and hence, if \(\D_H\) is a minimum path decomposition of \(H\),
	then \(\D' \cup \D_H\) is a decomposition of \(G\) with cardinality \(\pn(G_\mathcal{H}) + \pn(H)\).
	Now, suppose that \(G_\mathcal{H}\) is a Gallai graph.
	By Definition~\ref{def:feasible}(\ref{def:feasible1}) and~\ref{def:feasible}(\ref{def:feasible2}),
	there is a positive integer \(r\) 
	such that \(\pn(H)\leq r\) and  \(|V(G_\mathcal{H})| \leq |V(G)|-2r\).
	Since \(G_\mathcal{H}\) is a Gallai graph,
	we have \(\pn(G_\mathcal{H}) \leq \lfloor (|V(G)|-2r)/2\rfloor = \lfloor |V(G)|/2\rfloor -r\).
	Therefore, \(\D' \cup \D_H\) is a decomposition of \(G\) with cardinality at most \(\floor{|V(G)|/2}\).
\end{proof}

Although Definition~\ref{def:feasible} and Lemma~\ref{lemma:reducing} are general,
in this paper we use mainly the case where \(\pn(H) = 1\) and \(|\mathcal{A}|,|\mathcal{L}|\leq 2\).
We stated the definition in its full generality since we believe it is a powerful tool 
to deal with Conjecture~\ref{conj:gallai}.
The next lemma is used to check planarity in the proof of Claim~\ref{claim:no-reducing}.

\begin{lemma}\label{lemma:planarity-after-lifting}
	Let \(\Hcal\) be an FRS for a graph \(G\).
	If \(G\) is planar, then \(G_\Hcal\) is planar.
\end{lemma}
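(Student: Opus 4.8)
The plan is to show that $G_\Hcal$ is obtained from (a planar subgraph of) $G$ by a sequence of topological operations that preserve planarity, invoking the last remark before the lemma: since $\Hcal$ is an FRS, by Definition~\ref{def:feasible}(\ref{def:feasible4}) and~\ref{def:feasible}(\ref{def:feasible5}) the graph $G$ can be recovered from $G_\Hcal$ by subdividing edges in $E_\Lcal$ and then restoring $H$ and the paths in $\Acal$, with \emph{no identification of vertices}. Running this backwards, $G_\Hcal$ is built from $G$ by deleting $E(R_\Hcal)$, then adding the parallel edges $E_\Lcal$, then deleting the isolated vertices $I_\Hcal$. Deletion of edges and of isolated vertices trivially preserves planarity, so the only real content is that adding the edges of $E_\Lcal$ to $G - E(R_\Hcal)$ keeps the graph planar.

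First I would fix a planar embedding of $G$ and pass to the subgraph $G' = G - \bigl(E(R_\Hcal)\setminus E_\Lcal\bigr)$; note that every path $P \in \Lcal$ is a subpath of $R_\Hcal$ whose internal vertices lie in $I_\Hcal$ (Definition~\ref{def:feasible}(\ref{def:feasible4})), and these internal vertices become isolated in $G'$ once we delete the rest of $R_\Hcal$, because by Definition~\ref{def:feasible}(\ref{def:feasible5}) no vertex of $I_\Hcal$ lies on two paths of $\Acal\cup\Lcal$ and by Definition~\ref{def:feasible}(\ref{def:feasible3})--(\ref{def:feasible4}) all their incident edges are accounted for in $R_\Hcal$. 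Then I would argue that in the embedding inherited from $G$, each path $P \in \Lcal$ with end vertices $u,v$ is a curve in the plane from $u$ to $v$ whose interior meets no other vertex or edge that survives in $G_\Hcal$; contracting the interior of this curve (equivalently, suppressing the degree-$2$ internal vertices one by one) yields a planar drawing of the edge $e_P = uv$ in the face structure of the remaining graph. Doing this simultaneously for all paths in $\Lcal$ is legitimate because the paths in $\Lcal$ are internally disjoint (their internal vertices are distinct elements of $I_\Hcal$ by Definition~\ref{def:feasible}(\ref{def:feasible5})) and pairwise non-parallel (Definition~\ref{def:feasible}(\ref{def:feasible4}) guarantees their parallel edges are distinct, so we never try to draw the same edge twice), hence their supporting curves are disjoint except possibly at shared endpoints, and the suppressions do not interfere. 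Finally, deleting the remaining isolated vertices of $I_\Hcal$ does not affect planarity, and the resulting drawing is a planar embedding of $G_\Hcal = G - E(R_\Hcal) + E_\Lcal - I_\Hcal$.

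The main obstacle I expect is purely bookkeeping rather than conceptual: one must be careful that after deleting $E(R_\Hcal)\setminus E_\Lcal$ the paths in $\Lcal$ really do become \emph{induced} paths whose internal vertices have degree exactly $2$, so that suppressing those vertices is a genuine topological homeomorphism of the embedded graph. This is exactly where conditions~(\ref{def:feasible3}), (\ref{def:feasible4}), and~(\ref{def:feasible5}) of Definition~\ref{def:feasible} must be combined carefully — an internal vertex $w$ of a path in $\Lcal$ lies in $I_\Hcal$, so all of its incident edges in $G$ lie in $E(R_\Hcal)$, and by~(\ref{def:feasible5}) exactly two of them lie on a single path of $\Lcal$ and none on any other path of $\Acal\cup\Lcal$; hence in $G - E(R_\Hcal) + E_\Lcal$ the vertex $w$ becomes isolated and is removed, while in the intermediate graph $G'$ it has degree exactly $2$ with both edges on $P$. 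Once this is nailed down, edge-contraction/vertex-suppression preserves planarity by standard facts, and the lemma follows.
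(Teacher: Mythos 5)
Your proof is correct and follows essentially the same route as the paper: fix a planar drawing of $G$ with $E(H)$ and the edges of the paths in $\Acal$ deleted, and then replace each $P\in\Lcal$ by its parallel edge routed along $P$ (your suppression of the degree-$2$ internal vertices is the same topological operation), using Definition~\ref{def:feasible}(\ref{def:feasible4}) and~(\ref{def:feasible5}) to get internal disjointness and distinct parallel edges. Only a notational slip: your intermediate graph should be $G-E(H)-\sum_{A\in\Acal}E(A)$, i.e.\ $E(R_\Hcal)$ minus $\bigcup_{P\in\Lcal}E(P)$ rather than minus $E_\Lcal$ (which denotes the parallel edges), as your subsequent degree-$2$ argument already presupposes.
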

\begin{proof}
Let \(G\) be as in the statement and let \(\Hcal = (H, \Acal, \Lcal)\) be an FRS for \(G\).
Note that \(G' = G - E(H) - \sum_{A \in \Acal} E(A)\) is planar,
and fix a planar drawing \(\mathcal{G}'\) of \(G'\).
Clearly, \(G_{\mathcal{H}} = G' - \sum_{P \in \Lcal} E(P) + E_{\Lcal} - I_\mathcal{H}\).
We obtain a drawing \(\mathcal{G}\) of \(G_\Hcal\) from \(\mathcal{G}'\) by drawing \(e_P \in E_\Lcal\) along the path \(P\).
By Definitions~\ref{def:feasible}(\ref{def:feasible4}) and~\ref{def:feasible}(\ref{def:feasible5}),  the paths in \(\Lcal\) are pairwise internally disjoint and, since \(\mathcal{G}'\) is a planar drawing of \(G'\), two paths in \(\Lcal\) cannot cross in an edge.
Therefore, the drawings of \(e_{P_1}\) and \(e_{P_2}\) can intersect only in its vertices, and hence \(\mathcal{G}\) is a planar drawing.
\end{proof}

\section{Main result}\label{sec:main-result}

In this section we prove the following theorem, which is the main result of this paper, 
and verifies Gallai's Conjecture for triangle-free planar graphs.
Its proof consists of a series of claims regarding the structure of a minimum counterexample.

\begin{theorem}\label{theo:main}
	Every triangle-free planar graph is a Gallai graph.
\end{theorem}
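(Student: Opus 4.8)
The plan is to argue by contradiction: let $G$ be a counterexample to Theorem~\ref{theo:main} minimizing $|V(G)|$, and then $|E(G)|$ among those. Thus $G$ is triangle-free, planar, connected, and $\pn(G) > \floor{|V(G)|/2}$, while every triangle-free planar graph with fewer vertices (or the same number of vertices and fewer edges) is a Gallai graph. The heart of the argument is that minimality forces strong local structure around the vertices that are ``expensive'' to cover, which we call \emph{terminals} — roughly, odd-degree vertices, or more precisely vertices $v$ for which every path decomposition $\D$ has $\D(v)\geq 2$ (or some analogous excess-cost notion). The overall shape, following~\cite{BoSaCoLe}, is: establish that $G$ admits no FRS (this is immediate from Lemma~\ref{lemma:reducing} together with Lemma~\ref{lemma:planarity-after-lifting}, which guarantees the reduced graph is again a triangle-free planar graph on fewer vertices, hence a Gallai graph by minimality — note triangle-freeness of $G_\Hcal$ also needs checking, but lifting a path to its parallel edge cannot create a triangle since the FRS conditions forbid vertex identifications); then use ``no FRS'' and ``no reducing subgraph'' as the universal forbidden-structure tool to kill every candidate local configuration.

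\textbf{Key steps, in order.} First I would record the elementary reductions: $G$ has no vertex of degree $0$ or $1$ that is a terminal, no useful cut vertex (Claims~\ref{claim:no-useful-cut-vertex}, \ref{claim:no-terminal-degree-1}), so $G$ is $2$-connected; in particular $G$ has minimum degree at least $2$, and a triangle-free planar graph on $n\geq 3$ vertices has a vertex of degree at most $3$ (from $|E|\leq 2n-4$, the average degree is below $4$). Second, analyze a low-degree vertex $v$: if $d(v)=2$, use Lemma~\ref{lemma:cycle+path} or a direct lifting/suppression argument to build an FRS or a reducing subgraph centered at $v$ and its two neighbors, exploiting that $G$ triangle-free means the two neighbors are non-adjacent. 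Third, treat $d(v)=3$: the three neighbors are pairwise non-adjacent; by examining short paths and $4$-cycles through $v$ (planarity and triangle-freeness together constrain the faces incident to $v$), assemble the subgraph $H$ (a short path or small tree with $\pn(H)=1$) together with auxiliary paths $\Acal$ and lifting paths $\Lcal$ so that removing $H$ plus liftings isolates at least $2$ vertices — the FRS with $r=1$, $|\Acal|,|\Lcal|\leq 2$ regime the authors announce. Fourth, use Claim~\ref{claim:even-degree-neighbors} (via Lemma~\ref{lemma:fan2005}) to control the parity pattern in the neighborhoods of terminals, then Claim~\ref{claim:terminal-separator} to show terminals pairwise sit behind $2$-vertex separators; combine these to pin down a near-unique global picture (e.g.\ terminals arranged along a path-like spine, all other vertices of degree $2$ or sitting in controlled gadgets). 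Finally, show this forced picture is itself either a Gallai graph outright or contains an FRS, yielding the contradiction.

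\textbf{Main obstacle.} The hard part will be the degree-$3$ terminal analysis and the ``assembling the FRS'' bookkeeping: one must choose $H$, $\Acal$, $\Lcal$ so that \emph{all five} conditions of Definition~\ref{def:feasible} hold simultaneously — in particular condition~(iii) demands each auxiliary path end at an \emph{odd-degree} vertex of the \emph{reduced} graph $G_\Hcal$, so one has to track how removing edges changes degrees, and condition~(v) forbids reusing isolated vertices across $\Acal\cup\Lcal$, which limits how aggressively one can harvest structure. Triangle-freeness is the crucial extra leverage over~\cite{BoSaCoLe}: it rules out the exceptional small graphs $K_3$, $K_5-e$, $K_5$ and forces neighbors of any vertex to be independent, so girth-$\geq 4$ arguments (short even cycles, $\Theta$-subgraphs) replace the messier case analysis; the planarity is used only through the sparsity bound and Lemma~\ref{lemma:planarity-after-lifting}. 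I expect that once the degree-$2$ and degree-$3$ cases are cleared and the separator structure is in place, the endgame is a finite check, but getting there requires the eleven claims to interlock tightly, and verifying that no case escapes all of them is where the real work lies.
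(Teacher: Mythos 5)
Your outline follows the same high-level route as the paper (minimal counterexample, no FRS via Lemmas~\ref{lemma:reducing} and~\ref{lemma:planarity-after-lifting}, structure of low-degree ``terminal'' vertices, a terminal-separator argument), but it is a strategy sketch rather than a proof: essentially none of the structural claims that constitute the actual argument is carried out. The steps you defer — ``assemble the subgraph $H$ together with $\Acal$ and $\Lcal$ so that all five conditions of Definition~\ref{def:feasible} hold,'' ``pin down a near-unique global picture,'' ``the endgame is a finite check'' — are precisely the content of Claims~\ref{claim:terminals-are-not-adjacent} through~\ref{claim:terminal-separator} and the closing argument, and they do not reduce to a finite check: the paper concludes not with a forced ``path-like spine'' but by taking a terminal separator $S$ minimizing the order $\eta(S)$ of a smallest terminal component and deriving a contradiction from a second terminal separator inside it. Also, your working notion of terminal (odd-degree vertices, or vertices with $\D(v)\geq 2$) is not the one the argument needs; the proof works with vertices of degree at most $3$, whose existence in each relevant piece is guaranteed by the planar triangle-free edge bound, and whose degrees are then forced to be exactly $3$ (Claims~\ref{claim:no-terminal-degree-1} and~\ref{claim:no-terminal-degree-2}).

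There is also a concrete error in the one step you do claim in full: you assert that triangle-freeness of $G_\Hcal$ is automatic because ``lifting a path to its parallel edge cannot create a triangle since the FRS conditions forbid vertex identifications.'' This is false. The parallel edge $e_P=uv$ is a new edge of $G_\Hcal$, and it forms a triangle whenever $u$ and $v$ have a common neighbor in $G_\Hcal$; forbidding identifications says nothing about this. That is exactly why Claim~\ref{claim:no-reducing} is stated only for FRSs whose reduced graph is triangle-free, and why every application of an FRS with $\Lcal\neq\emptyset$ in the paper is preceded by an explicit distance verification (e.g.\ establishing $dist_{G-u-v}(b,y)\geq 3$ before adding the edge $by$, or ruling out paths of length $2$ between the endpoints of the lifted path). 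Without these checks, the ``no FRS'' tool you intend to use as a universal forbidden structure is not available in the generality you assume, and several of your proposed reductions would be unjustified.
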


\begin{proof}
Suppose, for a contradiction, that the statement is not true, 
and let $G$ be a counterexample with a minimum number of vertices and, subject to this, with a minimum number of edges.
Note that \(G\) is connected.
In what follows let \(n = |V(G)|\) and \(m = |E(G)|\).

\begin{claim}\label{claim:no-reducing}
	\(G\) does not admit an FRS \(\mathcal{H}\) such that \(G_\mathcal{H}\) is triangle-free.
	In particular, \(G\) admits no FRS \((H, \Acal, \Lcal)\) with \(\Lcal = \emptyset\), and hence contains no reducing subgraph.
\end{claim}

\begin{proof}
	Suppose, for a contradiction, that \(\mathcal{H}\) is an FRS for $G$ in which \(G_\mathcal{H}\) a triangle-free graph.
	By Lemma~\ref{lemma:planarity-after-lifting}, \(G_\mathcal{H}\) is also planar and hence, 
	by the minimality of \(G\), the graph \(G_\mathcal{H}\) is a Gallai graph.
	Therefore, by Lemma~\ref{lemma:reducing}, \(G\) is a Gallai graph, a contradiction.
	Now, note that, if \(G\) admits an FRS \(\mathcal{H}=(H,\mathcal{A},\mathcal{L})\) with \(\mathcal{L} = \emptyset\),
	then \(G_\mathcal{H}\subset G\), and hence \(G_\mathcal{H}\) is triangle-free.
\end{proof}

We say that an edge $e \in E(G)$ is a \emph{cut-edge} if $G-e$ is not connected, and we say that cut-edge $e \in E(G)$ is \emph{useful} if each component of $G-e$ has at least two vertices.

\begin{claim}\label{claim:no-useful-cut-edge}
	\(G\) has no useful cut-edges.
\end{claim}

\begin{proof}
	Suppose, for a contradiction, that \(G\) contains a useful cut-edge \(e\), 
	and let \(G'_1\) and \(G'_2\) be the two components of \(G - e\).
	For each \(i \in \{1,2\}\), let \(G_i = G'_i+e\) and $n_i=|V(G_i)|$, and note that $n_1 +n_2 = n+2$.
        By the definition of useful cut-edge, \(G'_{i}\) contains at least two vertices.
        Thus, for each \(i \in \{1,2\}\) we have  $n_i \leq n-1$ and, by the minimality of \(G\), \(\pn(G_i) \leq \floor{n_i / 2}\).
	Let \(\D_i\) be a minimum path decomposition of~\(G_i\), and let \(P_i\in\D_i\) be the path containing the edge \(e\).
	Let \(P = P_1 + P_2\), and note that $\D = \big(\D_1 \cup \D_2 \cup \{P\}\big) \setminus \{P_1, P_2\}$ is a path decomposition of \(G\)
	such that \(|\D| \leq \lfloor n_1/2\rfloor + \lfloor n_2/2\rfloor -1 \leq \lfloor (n_1 + n_2)/2 \rfloor -1 = \lfloor n/2\rfloor\).
	Thus, \(G\) is a Gallai graph, a contradiction.
\end{proof}

A subset $S\subseteq V(G)$ is called a \emph{separator} if $G-S$ is disconnected,
and if $u,v \in V(G)$ are in distinct connected components of $G-S$, we say that $S$ \emph{separates} $u$ and $v$. 
A vertex \(v \in V(G)\) is a \emph{cut-vertex}  if \(\{v\}\) is a separator in $G$.
We say that a cut-vertex~\(v\) is \emph{useless} if~\(G-v\) has precisely two components,
and one of these components is an isolated vertex. 
Otherwise, we say that~\(v\) is \emph{useful}.
Note that a cut-vertex~\(v\) is \emph{useful} if \(G - v\) has at least three components, 
or if each of its two components has at least two vertices.

\begin{claim}\label{claim:no-useful-cut-vertex}
	\(G\) has no useful cut-vertices.
\end{claim}

\begin{proof}
	Suppose, for a contradiction, that \(G\) contains a useful cut-vertex~\(v\).
	Let \(G'_1,\ldots, G'_k\) be the components of \(G-v\) and let $n_i = |V(G'_i)|$ for each \(i\in\{1,\ldots,k\}\).
	First, suppose that~$n_j$ is even for some \(j\in\{1,\ldots,k\}\), and define \(H = G\big[V(G'_j) \cup \{v\}\big]\).
	By the minimality of~\(G\), the graph \(H\) is a Gallai graph and, since \(n_j\) is even, \(H\) has an odd number of vertices.
	Thus, \(\pn(H) \leq \floor{(n_j + 1) /2} = n_j/2\), and hence \(H\) is a $(n_j/2)$-reducing subgraph, a contradiction to Claim~\ref{claim:no-reducing}.
	Therefore, we may assume that \(n_i\) is odd for every \(i\in\{1,\ldots,k\}\).

	Now, suppose that there is \(j\in \{1,\ldots,k\}\) such that \(G'_j\)	has at least two vertices.
	Let~\(v'\) be a new vertex. We now define \(H_1 = G - V(G'_j) + vv'\) and \(H_2 = G\big[V(G'_j)\cup \{v\}\big] + vv'\).
	Note that \(n = |V(H_1)| + |V(H_2)| - 3\).	
        Since \(G'_j\)	has at least two vertices and \(G'_j \subseteq H_2\),
        we have \(|V(H_1)| = |V(G)| - |V(G'_j)| + 1 < |V(G)|\). 
        Analogously, since \(v\) is a useful cut-vertex, \(\cup_{i\neq j} V(G'_i)\) has at least two vertices,
        and hence \(|V(H_2)| < |V(G)|\). 
 	Therefore,  by the minimality of \(G\), we have that \(H_1\) and \(H_2\) are Gallai graphs, i.e., \(\pn(H_1) \leq \floor{|V(H_1)|/2}\) and \(\pn(H_2) \leq \floor{|V(H_2)|/2}\). 
 	Moreover, since \(H_2\) has an odd number of vertices,
 	\(\pn(H_2) \leq (|V(H_2)|-1)/2\).

	For \(i = 1, 2\), let \(\D_i\) be a minimum path decomposition of \(H_i\), and let \(P_i\in\D_i\) be the path containing~\(vv'\).
	Let \(P = P_1 + P_2 - vv'\), and note that \( \D = \big(\D_1 \cup D_2 \cup \{P\}\big) \setminus \{P_1, P_2\}\) is a path decomposition of $G$ such that
	\begin{eqnarray*}
	|\D| \,\, = \,\, |\D_1| + |\D_2| - 1 \quad \leq & |V(H_1)|/2 + (|V(H_2)|-1)/2 - 1 \\
					= & {\hspace*{-1cm} (|V(H_1)| + |V(H_2)| - 3)/2} \\
					= & {\hspace*{-5cm} n/2}.
	\end{eqnarray*}
	Hence \(G\) is a Gallai graph, a contradiction. 
	Therefore, we can assume that \(G'_i\) has precisely one vertex for every \(i\in\{1,\ldots,k\}\), and hence \(G\) is a star. However, every star is a Gallai graph, again a contradiction.
\end{proof}

Note that, Claim~\ref{claim:no-useful-cut-vertex} implies that if \(G\) contains a cut-vertex, then \(G\) contains a vertex of degree~\(1\). 
In Claim~\ref{claim:no-terminal-degree-1}, we verify that \(G\) has no vertices of degree \(1\),
and hence \(G\) is \(2\)-connected.

\begin{claim}\label{claim:few-small-vertices}
	\(G\) has at most one vertex with degree at most \(2\).
\end{claim}

\begin{proof}
	Suppose, for a contradiction, that \(G\) has two vertices, say \(u\) and \(v\), such that \(d(u), d(v)\leq 2\), 
	and let \(P'\) be a shortest path in \(G\) joining \(u\) and \(v\).
	Let \(S\) be the set of edges incident to \(u\) or \(v\), and put \(P = P'+S\).
	Note that \(P\) is obtained from \(P'\) by adding at most one edge at each of its end vertices.
	Thus, by the minimality of \(P'\), 
	the subgraph \(P\) is either a path or a cycle of length \(4\).
	If \(P\) is a path, then~$P$ is a 1-reducing subgraph, a contradiction to Claim~\ref{claim:no-reducing}.
	Therefore, we may assume that \(P\) is a cycle of length \(4\).
	Clearly, $G\neq P$, otherwise \(G\) would be a Gallai graph.
	By the minimality of \(G\), the graph \(G'=G\mydiff E(P)\) is a Gallai graph and hence, there is a path decomposition $\mathcal{D}$ of $G'$ of cardinality at most $\lfloor (n-2)/2\rfloor$.
	Since \(G\) is connected, there is a path $\tilde{P}$ in $\mathcal{D}$ that intersects $P$.
	By Lemma~\ref{lemma:cycle+path}, we can decompose $P+\tilde{P}$ into two paths.
	Thus, \(\pn(G) \leq (\pn(G') -1) + 2 \leq \lfloor n/2\rfloor\), and hence \(G\) is a Gallai graph, a contradiction.
\end{proof}

The next claim is essentially an application of Lemma~\ref{lemma:fan2005}.

\begin{claim}\label{claim:even-degree-neighbors}
	No vertex of \(G\) has precisely one neighbor with even degree.
\end{claim}

\begin{proof}
	Suppose, for a contradiction, that \(G\) contains a vertex \(u\) that has precisely one neighbor, say \(v\), with even degree.
	Let \(G' = G-uv\).
	By the minimality of \(G\), \(G'\) is a Gallai graph. 
	Thus, let \(\D'\) be a minimum path decomposition of \(G'\).
	Note that every vertex \(w \in N_G(u)\) has odd degree in \(G'\), and hence 
	\(\{w\in N_{G'}(u)\colon \D'(w) = 0\}=\emptyset\). 
	By Lemma~\ref{lemma:fan2005}, there is a path decomposition of \(G\) of size \(|\D'|\).
	Therefore, \(G\) is a Gallai graph, a contradiction.
\end{proof}

We say that a vertex in \(G\) is a \emph{terminal} of \(G\) if its degree is at most \(3\).

\begin{claim}\label{claim:six-terminal-vertices}
	\(G\) has at least six terminal vertices.
\end{claim}

\begin{proof}
	Since \(G\) is a triangle-free planar graph, \(m\leq 2n-4\).
	Let \(V'\) be the set of terminals of \(G\), and let \(V'' = V(G) \setminus V'\).
	Note that \(V' \neq \emptyset\), otherwise \(V'' = V(G)\) and \(4n - 8 \geq 2m = \sum_{v \in V''} d(v) \geq 4n\), a contradiction.
	Let \(w\) be a vertex of \(V'\) with minimum degree in \(G\).
	By Claim~\ref{claim:few-small-vertices}, \(d(v) = 3\) for every vertex \(v \in V' \setminus \{w\}\).
	Thus
	\begin{align*}
		2m	&= \sum_{v\in V'} d(v) + \sum_{v \in V''} d(v) \\ 
			&= d(w) + \sum_{v \in V'\setminus \{w\}} d(v) + \sum_{v \in V''} d(v)\\
			&\geq 1 + 3|V' \setminus \{w\}| + 4 |V''| \\
			&= 1 + 3\big(|V'|-1\big) + 4\big(n-|V'|\big) \\
			&= 4n - |V'| - 2.
	\end{align*}
	Therefore, \(4n - 8 \geq 2m \geq 4n - |V'| -2\), and hence \(|V'| \geq 6\).
\end{proof}

In what follows, we study the relation between the terminal vertices, following the strategy used in~\cite{BoSaCoLe}.
Claim~\ref{claim:terminals-are-not-adjacent} states that the set of terminal vertices is independent,
while Claims~\ref{claim:no-terminal-degree-1} and~\ref{claim:no-terminal-degree-2} prove
that every terminal has degree \(3\).
Claim~\ref{property:three-common-neighbors} states that two distinct terminals have at most two common neighbors,
and Claim~\ref{claim:terminal-separator} states that for every two terminals \(u\) and \(v\),
there must be two vertices \(x\) and \(y\) such that \(\{x,y\}\) separates \(u\) and \(v\).
This last property is then used to conclude the proof.

\begin{claim}\label{claim:terminals-are-not-adjacent}
	No two terminals are adjacent.
\end{claim}

\begin{proof}
	In this proof we use the following terminology. We say that two adjacent vertices \(u\) and \(v\) form a \emph{small diamond} if \(d(u) = d(v) = 3\) 
	and there exists a vertex \(w\notin N(u)\cup N(v)\) 
	such that \(\big((N(u)\cup N(v))\setminus\{u,v\}\big)\subseteq N(w)\)
	(see Figure~\ref{fig:diamond}).

	\begin{figure}[h]
		\centering
		\begin{tikzpicture}[scale = 0.8]
	\node (u) [black vertex] at (-1,1) {};
	\node (v) [black vertex] at (1,1) {};
	\node (a) [black vertex] at (-2,-1) {};
	\node (b) [black vertex] at (-1,-.75) {};
	\node (x) [black vertex] at (1,-.75) {};
	\node (y) [black vertex] at (2,-1) {};
	\node (w) [black vertex] at (0,-2) {};

	\node () [] at ($(u)+(135:.6)$)	{$u$};
	\node () [] at ($(v)+(45:.6)$)		{$v$};
	\node () [] at ($(a)+(180:.6)$)	{$b$};
	\node () [] at ($(y)+(0:.6)$)	{$y$};
	\node () [] at ($(b)+(45:.6)$)	{$a$};
	\node () [] at ($(x)+(135:.6)$)	{$x$};
	\node () [] at ($(w)+(-90:.6)$)	{$w$};
        \draw[edge] (u) -- (v);
        \draw[edge] (u) -- (a);
        \draw[edge] (u) -- (b);
        \draw[edge] (v) -- (x);
        \draw[edge] (v) -- (y);
        \draw[edge] (a) -- (w);
        \draw[edge] (b) -- (w);
        \draw[edge] (x) -- (w);
        \draw[edge] (y) -- (w);
\end{tikzpicture}
		\caption{Example of a small diamond. }\label{fig:diamond}
	\end{figure}
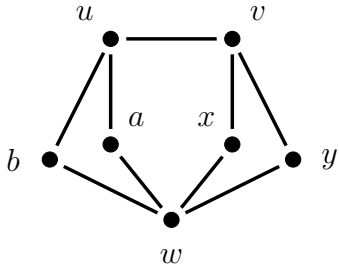

	\begin{subclaim}\label{claim:adjacent-terminals-1}
		Every two adjacent terminals form a small diamond.
	\end{subclaim}

	\begin{proof}[Proof of Subclaim~\ref{claim:adjacent-terminals-1}]
	    Let \(u\) and \(v\) be two adjacent terminals.
		Suppose, without loss of generality, that \(d(v) \leq d(u)\).
		By Claim~\ref{claim:few-small-vertices}, we have \(d(u) = 3\), and hence let \(N(u) = \{v, a, b\}\).
		First, suppose that \(d(v) \leq 2\).
		Let~\(S\) be the set of edges incident to \(v\).
		Note that \(d(a)\) is odd, otherwise \((bu + S, \{au\},\emptyset)\) 
		would be an FRS for \(G\) (see Figure~\ref{fig:m4}), 
		a contradiction to Claim~\ref{claim:no-reducing}.
		By symmetry, \(d(b)\) is odd as well.
		Therefore, by Claim~\ref{claim:even-degree-neighbors}, \(d(v)\) is odd, and hence \(d(v) = 1\).
		By Claim~\ref{claim:few-small-vertices}, \(d(a) \geq 3\), and by Claim~\ref{claim:no-useful-cut-edge},
			\(ua\) is not a cut-edge.
		Let~\(P'\) be a path joining \(a\) and \(b\) in \(G - ua\), and note that \(u, v \notin V(P')\).
		Hence, \((P' + buv, \{au\},\emptyset)\) is an FRS for \(G\) (see Figure~\ref{fig:m5}), 
		a contradiction to Claim~\ref{claim:no-reducing}.

		\begin{figure}[h]
			\centering
			\begin{subfigure}[b]{.33\linewidth}
				\centering\scalebox{.7}{\begin{tikzpicture}[scale = 0.6]
	\node (u) [squared black vertex] at ($(0:0)$) {};
	\node (a) [black vertex] at ($(u)+(-45:3)$) {};
	\node (b) [black vertex] at ($(u)+(-135:3)$) {};

	\node (label_u) [] at ($(u)+(90:.6)$) {$u$};
	\node (label_a) [] at ($(a)+(-90:.6)$) {$a$};
	\node (label_b) [] at ($(b)+(-90:.6)$) {$b$};

	\begin{scope}[xshift=5cm]
	\node (v) [squared black vertex] at ($(0:0)$) {};
	\node (z) [white vertex] at ($(v)+(-45:3)$) {};

	\node (label_v) [] at ($(v)+(90:.6)$) {$v$};
	\node (label_z) [] at ($(z)+(-90:.6)$) {$z$};
	\end{scope}

	\draw[edge, color1] (b) -- (u) -- (v); 
	\draw[edge, color2] (a) -- (u); 
	\draw[possible edge, color1] (v) -- (z); 

    \node (label_Pp) [] at ($(u)+(-90:2.8)$) {\phantom{$P'$}};
\end{tikzpicture}}
				\caption{}\label{fig:m4}
			\end{subfigure}%
			\hfill
			\begin{subfigure}[b]{.33\linewidth}
				\centering\scalebox{.7}{\begin{tikzpicture}[scale = 0.6]
	\node (u) [squared black vertex] at ($(0:0)$) {};
	\node (a) [black vertex] at ($(u)+(-45:3)$) {};
	\node (b) [black vertex] at ($(u)+(-135:3)$) {};

	\node (label_u) [] at ($(u)+(90:.6)$) {$u$};
	\node (label_a) [] at ($(a)+(-90:.6)$) {$a$};
	\node (label_b) [] at ($(b)+(-90:.6)$) {$b$};

    \node (label_Pp) [] at ($(u)+(-90:2.8)$) {$P'$};

	\begin{scope}[xshift=5cm]
	\node (v) [squared black vertex] at ($(0:0)$) {};

	\node (label_v) [] at ($(v)+(90:.6)$) {$v$};
	\end{scope}

	\draw[edge, color1] (b) -- (u) -- (v); 
	\draw[edge, short snake, color1] (b) to (a);
	\draw[edge, color2] (a) -- (u); 
\end{tikzpicture}}
				\caption{}\label{fig:m5}
			\end{subfigure}%
			\hfill
			\begin{subfigure}[b]{.33\linewidth}
				\centering\scalebox{.7}{\begin{tikzpicture}[scale = 0.6]
	\node (u) [squared black vertex] at ($(0:0)$) {};
	\node (a) [black vertex] at ($(u)+(-45:3)$) {};
	\node (b) [black vertex] at ($(u)+(-135:3)$) {};

	\node (label_u) [] at ($(u)+(90:.6)$) {$u$};
	\node (label_a) [] at ($(a)+(-90:.6)$) {$a$};
	\node (label_b) [] at ($(b)+(-90:.6)$) {$b$};

	\begin{scope}[xshift=7cm]
	\node (v) [squared black vertex] at ($(0:0)$) {};
	\node (x) [black vertex] at ($(v)+(-135:3)$) {};
	\node (y) [black vertex] at ($(v)+(-45:3)$) {};

	\node (label_v) [] at ($(v)+(90:.6)$) {$v$};
	\node (label_x) [] at ($(x)+(-90:.6)$) {$x$};
	\node (label_y) [] at ($(y)+(-90:.6)$) {$y$};
	\end{scope}

	\draw[edge, color2] (b) -- (u) -- (v); 
	\draw[edge, color1]  (u) -- (a) (v) -- (x) (v) -- (y);
	\draw[edge, short snake, color1] (a) to   (x);

    \node (label_Pp) [] at ($(u)+(0:3.5)+(-90:2.8)$) {$P'$};
\end{tikzpicture}}
				\caption{}\label{fig:m6}
			\end{subfigure}%
			\caption{Each figure illustrates a reducing scheme $(H,\Acal,\Lcal)$ where $H$ is illustrated in blue, 
				and the paths in $\mathcal{A}$ are illustrated in red.
				In (a) we used a dashed line and a white circle to denote the neighbor of \(v\) 
				that does not necessarily exists.
				}
			\label{fig:case1}
		\end{figure}
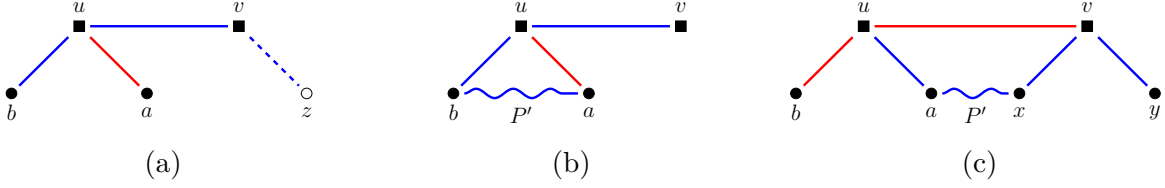

		Therefore, we may suppose \(d(v) = 3\), and  let \(N(v) = \{u, x, y\}\).
		By Claim~\ref{claim:no-useful-cut-edge},
		since \(d(u) = d(v) = 3\),~\(uv\) is not a cut-edge.
		Let \(P'\) be a shortest path in \(G-uv\) joining \(u\) and \(v\), and suppose without loss of generality that \(a,x\in V(P')\).
		By the minimality of \(P'\), \(b,y\notin V(P')\).
		Thus \(d(b)\) is odd, otherwise \((P' + vy, \{buv\},\emptyset)\) would be an FRS for~\(G\)
		(see Figure~\ref{fig:m6}),
		a contradiction to Claim~\ref{claim:no-reducing}.
		By symmetry, \(d(y)\) is also odd and, by Claim~\ref{claim:even-degree-neighbors}, \(d(a)\) and \(d(x)\) are odd.
		We claim that \(dist_{G-u-v}(b,y) \leq 2\).
		Suppose, for a contradiction, that \(dist_{G-u-v}(b,y) \geq 3\), and let \(G' = G -u-v+by\).
		Since \(dist_{G - u - v} (b, y) \geq 3\), \(G'\) is triangle-free.
		However, \(\Hcal=(P',\emptyset,\{buvy\})\) is an FRS for \(G\) such that \(G_\Hcal=G'\) (see Figure~\ref{fig:m7}),
		a contradiction to Claim~\ref{claim:no-reducing}.
		Thus, we have \(dist_{G-u-v}(b,y) \leq 2\).
		 
		Note that for every path \(Q'\) joining \(b\) and \(y\) in \(G - u - v\) 
		we have \(V(Q') \cap V(P') \neq \emptyset\), 
		otherwise \((P' + vy + Q', \{buv\},\emptyset)\) would be an FRS for \(G\) (see Figure~\ref{fig:m8}),
		a contradiction to Claim~\ref{claim:no-reducing}.
		Let~\(Q'\) be a shortest path joining \(b\) and \(y\) in \(G - u - v\),
		and let \(w\in V(Q')\cap V(P')\).
		Since \(G\) is triangle-free, \(a, x \notin V(Q')\).
		Also, by the minimality of \(P'\), 
		we have \(dist_{G - u - v} (a, x) \leq dist_{G-u-v}(b,y)\).
		Thus, $dist_{G-u-v} (a,x) = dist_{G-u-v} (b,y) = 2$,
		and hence \(\{a,b,x,y\}\subseteq N(w)\).
		Therefore, \(u\) and \(v\) form a small diamond.
	\end{proof}

	\begin{figure}[h]
		\centering
		\begin{subfigure}[b]{.45\linewidth}
			\centering\scalebox{.7}{\begin{tikzpicture}[scale = 0.6]
	\node (u) [squared black vertex] at ($(0:0)$) {};
	\node (a) [black vertex] at ($(u)+(-45:3)$) {};
	\node (b) [black vertex] at ($(u)+(-135:3)$) {};

	\node (label_u) [] at ($(u)+(90:.6)$) {$u$};
	\node (label_a) [] at ($(a)+(180:.6)$) {$a$};
	\node (label_b) [] at ($(b)+(180:.6)$) {$b$};

	\node (anchor_b) [] at ($(b)+(-45:1.5)$) {};

        \node (label_path2) [] at ($(u)+(0:3.5) + (-90:2.8)$) {$P'$};

	\begin{scope}[xshift=7cm]
	\node (v) [squared black vertex] at ($(9:0)$) {};
	\node (x) [black vertex] at ($(v)+(-135:3)$) {};
	\node (y) [black vertex] at ($(v)+(-45:3)$) {};

	\node (anchor_y) [] at ($(y)+(-135:1.5)$) {};

	\node (label_v) [] at ($(v)+(90:.6)$) {$v$};
	\node (label_x) [] at ($(x)+(0:.6)$) {$x$};
	\node (label_y) [] at ($(y)+(0:.6)$) {$y$};
	\end{scope}

	\draw[edge, color3] (b) -- (u) -- (v)  (v) -- (y);
	\draw[edge, color1]  (u) -- (a) (v) -- (x);
	\draw[edge, short snake, color1] (a) to  (x);
	
	\draw[edge, short snake, color=white] (b) to [bend right= 30]  (y);

\end{tikzpicture}}
			\caption{}\label{fig:m7}
		\end{subfigure}%
		\hfill
		\begin{subfigure}[b]{.45\linewidth}
			\centering\scalebox{.7}{\begin{tikzpicture}[scale = 0.6]
	\node (u) [squared black vertex] at ($(0:0)$) {};
	\node (a) [black vertex] at ($(u)+(-45:3)$) {};
	\node (b) [black vertex] at ($(u)+(-135:3)$) {};

	\node (label_u) [] at ($(u)+(90:.6)$) {$u$};
	\node (label_a) [] at ($(a)+(180:.6)$) {$a$};
	\node (label_b) [] at ($(b)+(180:.6)$) {$b$};

        \node (label_path1) [] at ($(u) + (0:3.5) + (-90:1.6)$) {$P'$};
        \node (label_path2) [] at ($(u) + (0:3.5) + (-90:3.4)$) {$Q'$};

	\begin{scope}[xshift=7cm]
	\node (v) [squared black vertex] at ($(0:0)$) {};
	\node (x) [black vertex] at ($(v)+(-135:3)$) {};
	\node (y) [black vertex] at ($(v)+(-45:3)$) {};

	\node (label_v) [] at ($(v)+(90:.6)$) {$v$};
	\node (label_x) [] at ($(x)+(0:.6)$) {$x$};
	\node (label_y) [] at ($(y)+(0:.6)$) {$y$};
	\end{scope}

	\draw[edge, color2] (b) -- (u) -- (v);
	\draw[edge, color1]  (u) -- (a) (v) -- (x) (v) -- (y);
	\draw[edge, short snake, color1] (a) to  (x);
	\draw[edge, short snake, color1] (b) to [bend right= 30]  (y);
\end{tikzpicture}}
			\caption{}\label{fig:m8}
		\end{subfigure}%
		\caption{Each figure illustrates a reducing scheme $(H,\Acal,\Lcal)$ where $H$ is illustrated in blue, 
				the paths in $\mathcal{A}$ are illustrated in red,
				and the paths in $\Lcal$ are illustrated in green.}
		\label{fig:case}
	\end{figure}
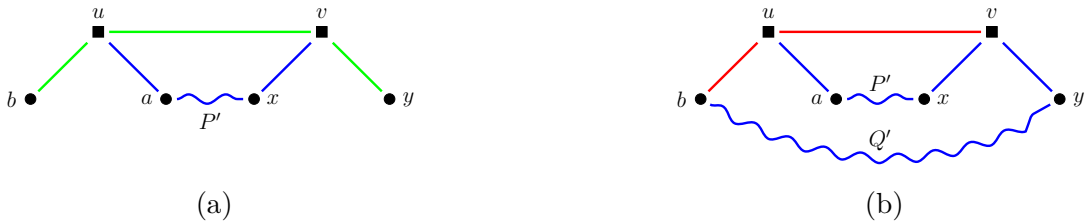

	\begin{subclaim}\label{claim:adjacent-terminals-2}
		Every terminal has at most one neighbor that is a terminal.
	\end{subclaim}

	\begin{proof}[Proof of Subclaim~\ref{claim:adjacent-terminals-2}]
		Suppose that \(u\) and \(v\) are adjacent terminals.
		By Subclaim~\ref{claim:adjacent-terminals-1}, \(u\) and \(v\) form a small diamond.
		Let \(N(u) = \{v, a, b\}\),  \(N(v) = \{u, x, y\}\), and let \(w\) be  a vertex such that \(\{a,b, x, y\} \subseteq N(w)\).
		Suppose, without loss of generality, that \(a\) is a terminal.
		By Subclaim~\ref{claim:adjacent-terminals-1} applied to \(a\) and \(u\), there is a vertex \(w'\) such that \(w,b\in N(w')\) because \(w\in N(a)\setminus\{u\}\) and \(b\in N(u)\setminus\{a\}\).
		But since \(bw\in E(G)\), there is a triangle in~\(G\), a contradiction.
	\end{proof}

	As before let \(u\) and \(v\) be adjacent terminals.
	Let \(N(u) = \{v,a,b\}\) and \(N(v) = \{u, x,y\}\), and let \(w\) be the vertex given by Claim~\ref{claim:adjacent-terminals-1} (see Figure~\ref{fig:diamond}).
	We claim that \(dist_{G-u}(a,v) \geq 3\).
	Indeed, suppose that there is a path \(Q\) of length at most~\(2\) in \(G-u\) joining~\(a\) and~\(v\).
	Since \(a \notin N(v)\), \(Q\) has length \(2\). 
	In this case, \(a\in N(x)\cup N(y)\), but then either \(awxa\) or \(awya\) is a triangle in \(G\), a contradiction.
	By symmetry, we have \(dist_{G-u}(b,v), dist_{G-v}(x,u), dist_{G-v}(y,u)  \geq 3\).

    Now, let \(z\) be a terminal of \(G\) different from \(u\) and \(v\).
    Note that \(z\) exists by Claim~\ref{claim:six-terminal-vertices}. 
    Moreover, by Claim~\ref{claim:adjacent-terminals-2}, \(z\neq a,b,x,y\), and since \(d(z)\leq 3\) and \(d(w)\geq 4\), we have \(z\neq w\).
	We claim that \(d(z) = 3\).
	Indeed, suppose that \(d(z)\leq2\), and let \(P'\) be a shortest path joining \(z\) to a vertex \(u'\) in \(\{a,b,x,y\}\).
	Suppose, without loss of generality, that \(u'=a\)
	and let \(\{z'\} = N(z) \cap V(P')\).
	If \(d(z) = 1\), then put \(P = ua + P'\). 
	In the case that \(d(z) = 2\), let \(x'\in N(z) \setminus \{z'\}\) and put \(P = P' + zx' + au\).
	Let \(G' = G -u-z-E(P) + bv\).
	Since \(dist_{G-u}(b,v)\geq 3\), \(G'\) is triangle-free.
	However, \(\Hcal=(P,\emptyset,\{buv\})\) is an FRS for \(G\) such that \(G_\Hcal=G'\),
	a contradiction to Claim~\ref{claim:no-reducing}.
	Thus, we may assume that \(d(z) = 3\) for every terminal \(z\) of \(G\).
	Note that this implies, by Claim~\ref{claim:no-useful-cut-vertex}, that \(G\) has no cut-vertex.
	In what follows, fix a terminal \(z\) different from \(u\) and \(v\).

	\begin{subclaim}\label{subclaim:special-path}
		There is a path \(P\) joining a vertex \(u'\) in \(\{a,b,x,y\}\) to a vertex \(z'\) in \(N(z)\) such that:
		\begin{enumerate}[\rm 1.]
			\item\label{it:one} 	\(P\) contains no vertex from \(\{a,b,x,y\} \setminus \{u'\}\) and no vertex from \(N(z) \setminus \{z'\}\); and
			\item\label{it:two} 	Let $\mathcal{P}^z_{z'}$ be the set of paths of length \(2\) in \(G-z\) joining the two vertices in \(N(z) \setminus \{z'\}\). 
						If $\mathcal{P}^z_{z'}\neq \emptyset$, then there is 
						\(Q\in \mathcal{P}^z_{z'}\) such that \(V(Q)\cap V(P)=\emptyset\).
		\end{enumerate}
	\end{subclaim}

	\begin{proof}[Proof of Subclaim~\ref{subclaim:special-path}]
	    Let \(N(z) = \{x',y',z'\}\),
	    and let \(P\) be a shortest path from a vertex in \(\{a,b,x,y\}\) to a vertex in \(N(z)\).
		Then \(P\) contains at most one vertex of \(\{a,b,x,y\}\), and at most one vertex of \(N(z)\). 
		Hence, $P$ satisfies~\ref{it:one}.
		Suppose, without loss of generality, that \(V(P)\cap N(z) = \{z'\}\).
		If \(\mathcal{P}^z_{z'}=\emptyset\), then \(P\) satisfies \ref{it:two}.
		Thus, we may assume that \(\mathcal{P}^z_{z'}\neq\emptyset\), and let \(Q=x'w'y'\in\mathcal{P}^z_{z'}\).
		Assume that \(V(Q)\cap V(P)\neq \emptyset\), otherwise \(P\) also satisfies~\ref{it:two}. 
		Clearly, we have \(V(Q)\cap V(P)= \{w'\}\).
		Moreover, by the minimality of \(P\), the vertex \(w'\) is a neighbor of \(z'\) in \(G\), 
		and hence \(\{x', y', z'\} \subseteq N(w')\). 
		Since \(G - w'\) is connected, 
		there is a shortest path \(P'\) in \(G - w'\) joining a vertex in \(\{a,b,x,y\}\) to a vertex \(z''\) in \(N(z)\).
		Moreover, by the minimality of \(P'\), we have \(V(P') \cap N(z) = \{z''\}\).
		Hence \(P'\) satisfies~\ref{it:one}.
		Let \(\{v_1, v_2\} = N(z) \setminus \{z''\}\). 
		Then \(Q' = v_1w'v_2\) is a path of length \(2\) in \(G - z\) 
		joining the two vertices of \(N(z) \setminus \{z''\}\) such that \(V(Q') \cap V(P') = \emptyset\).
		Thus, $P'$ also satisfies~\ref{it:two}.
		\end{proof}

	Now, let \(P'\) be a path given by Subclaim~\ref{subclaim:special-path}.
	Let \(N(z) = \{x',y',z'\}\) and suppose, without loss of generality, that \(a, z' \in V(P')\) and \(y', x', b, x, y \notin V(P')\).
	Suppose that \(y'\) has even degree in $G$.
	Let \(G' = G - u - z - E(P') + bv\).
	Since \(dist_{G-u}(b,v)\geq 3\), \(G'\) is triangle-free. 
	Let \(P = x'zz' + P' + au\).
	Since \(u\) and \(z\) are not adjacent, we have \(x'\neq u\), and hence \(P\) is a path.
	However, \(\Hcal=(P,\{y'z\},\{buv\})\) is an FRS for \(G\) such that \(G_\Hcal=G'\),	
        a contradiction to Claim~\ref{claim:no-reducing}.
	Thus, \(d(y')\) is odd and, by symmetry \(d(x')\) is odd as well.
	Moreover, by Claim~\ref{claim:even-degree-neighbors}, \(z'\) has odd degree. 
	Hence, \(x'\), \(y'\), and \(z'\) have odd degree in \(G\).
	
	In what follows we divide the proof on whether \(\mathcal{P}^z_{z'}=\emptyset\) or \(\mathcal{P}^z_{z'}\neq\emptyset\).
	First, suppose that $\mathcal{P}^z_{z'}=\emptyset$.
	Let \(G' = G -u - z - E(P') + x'y' + bv\).
	Since~$\mathcal{P}^z_{z'}=\emptyset$ and \(dist_{G - u}(b, v) \geq 3\), 
	we have that \(G'\) has no triangle containing exactly one of the edges \(x'y'\), \(bv\).
	Thus, if \(G'\) has a triangle, then it contains both edges \(x'y'\) and \(bv\),
	and hence the edges \(x'y'\) and \(bv\) have a vertex in common.
	Since \(u\) and \(v\) are adjacent,
	Claim~\ref{claim:adjacent-terminals-2} implies that \(y', x' \neq v\).
	Thus, \(b = y'\) or \(b = x'\).
	Suppose, without loss of generality, that \(b = y'\).
	Then, the edge \(x'v\) is necessarily in \(G\).
	Suppose, without loss of generality, that \(x' = x\).
	Therefore, \(w\) is adjacent to both \(b=y'\) and \(x=x'\),
	a contradiction to our assumption $\mathcal{P}^z_{z'}=\emptyset$.
	Thus, \(G'\) is triangle-free.
	However, \(\Hcal=(au + P' + z'z,\emptyset,\{x'zy',buv\})\) is an FRS for \(G\) such that \(G_\Hcal=G'\),	
    a contradiction to Claim~\ref{claim:no-reducing}.
	
	Now, suppose that~$\mathcal{P}^z_{z'}\neq \emptyset$. 
	By Subclaim~\ref{subclaim:special-path}, there
	is a path \(Q' = y'w'x'\) of length~\(2\) in \(G - z\) such that \(V(Q') \cap V(P') = \emptyset\).
	Let \(G' = G -u - z - E(P') -E(Q') + bv\).
	Since \(dist_{G - u}(b, v) \geq 3\), \(G'\) has no triangle.
	Since \(x'\), \(y'\), and \(z'\) have odd degree in \(G\), 
	and \(d_{G'}(y') = d_G(y')-2\), the vertex \(y'\) has odd degree in \(G'\).
	Let \(P = au + P' +  z'zx'w'y'\).
	If \(w'=u\), then \(\{y',x'\}=\{b,v\}\), hence \(y'zx'\) is a path of length \(2\) joining \(b\) to \(v\), 
	a contradiction to \(dist_{G - u}(b, v) \geq 3\). Thus, \(P\) is a path.
	However, \(\Hcal=(P,\{zy'\},\{buv\})\) is an FRS for \(G\) such that \(G_\Hcal=G'\),	
    a contradiction to Claim~\ref{claim:no-reducing}.
	This finishes the proof of Claim~\ref{claim:terminals-are-not-adjacent}.
\end{proof}

\begin{claim}\label{claim:no-terminal-degree-1}
	No terminal has degree \(1\).
\end{claim}

\begin{proof}
	Suppose, for a contradiction, that \(G\) contains a terminal~\(u\) with degree~\(1\), and let~\(u'\) be the only neighbor of \(u\).
	By Claim~\ref{claim:few-small-vertices}, we may assume that \(d(v) = 3\) for every terminal~\(v\) of~\(G\) different from \(u\).
	We say that \(u\) and a terminal \(v \in V(G) \setminus \{u\}\) form a \emph{\((u,v)\)-kite} if \(N(v) \subseteq N(u')\).
	The next subclaim is similar to Subclaim~\ref{subclaim:special-path}.

	\begin{subclaim}\label{claim:path-satisfing-conditions}
		If \(v \in V(G) \setminus \{u\}\) is a terminal of \(G\) and \(u\) and \(v\) do not form a \((u,v)\)-kite, then there is a path \(P\) joining \(u\) and \(v\) such that:
		\begin{enumerate}[\rm 1.]
			\item\label{it:psc-1} \(P\) contains precisely one vertex, say \(v'\), in \(N(v)\); and
			\item \label{it:psc-2} Let $\mathcal{P}^{u,v}_{v'}$ be the set of paths of length \(2\) in \(G-u-v\) joining the two vertices in \(N(v) \setminus \{v'\}\). 
			If $\mathcal{P}^{u,v}_{v'} \neq \emptyset$, then there is a path \(Q\in\mathcal{P}^{u,v}_{v'}\) such that \(V(Q)\cap V(P)=\emptyset\).
		\end{enumerate}
	\end{subclaim}
 	
	\begin{proof}[Proof of Subclaim~\ref{claim:path-satisfing-conditions}]
		Let \(N(v) = \{x, y, z\}\) and let \(P\) be a shortest path joining \(u\) and~\(v\).
		Note that \(u' \in V(P)\) and that \(P\) contains precisely one vertex of \(N(v)\). 
		Thus, $P$ satisfies~\ref{it:psc-1}.
		Suppose, without loss generality, that \(V(P)\cap N(v) = \{x\}\).
		We may assume that \(\mathcal{P}^{u,v}_{x}\neq\emptyset\), otherwise \(P\) satisfies \ref{it:psc-2}.
		Thus, let \(Q=ywz\in\mathcal{P}^{u,v}_{x}\).	
		If \(w \notin V(P)\), then \(V(Q)\cap V(P)=\emptyset\), and the result follows.
		Then, suppose that \(w \in V(P)\).
		By the minimality of \(P\), the vertex \(w\) must be a neighbor of \(x\) in \(G\), and hence \(\{x,y,z\} \subseteq N(w)\).
		Note that \(w\) must be different of \(u'\), otherwise \(u\) and \(v\) would form a \((u, v)\)-kite.

		Now, suppose that \(w\) is a cut-vertex of~\(G\).
		By Claim~\ref{claim:no-useful-cut-vertex}, \(w\) is not useful, and hence \(G - w\) 
		has a component which consists of an isolated vertex. However, this isolated vertex has degree \(1\) in $G$  and it is distinct from $u$, 
		a contradiction to Claim~\ref{claim:few-small-vertices}.				
		Therefore, we may assume that \(G - w\) is connected, and hence there is a shortest path \(R\) in \(G-w\) joining~\(u\) to~\(v\).
		By the minimality of~\(R\), we have that \(R\) contains precisely one vertex, say \(v'\), in \(N(v)\). 
		Moreover, let \(\{v_1,v_2\} = N(v)\setminus\{v'\}\), and note that \(Q' = v_1wv_2\in\mathcal{P}^{u,v}_{v'}\) is 
		such that \(V(Q')\cap V(R)= \emptyset\).
		Thus, \(R\) satisfies~\ref{it:psc-1} and~\ref{it:psc-2}.
	\end{proof}

	\begin{subclaim}\label{claim:degree-1-kite}
		For every terminal \(v\) of \(G\) different from \(u\), the vertices \(u\) and \(v\) form a \((u,v)\)-kite and every vertex in \(N(v)\) has odd degree.
	\end{subclaim}

	\begin{proof}[Proof of Subclaim~\ref{claim:degree-1-kite}] 
		Let \(v \in V(G) \setminus \{u\}\) be a terminal and let \(N(v) = \{x, y, z\}\).
		First, we prove that every vertex in \(N(v)\) has odd degree.
		Suppose, for a contradiction, that \(N(v)\) contains a vertex with even degree.
		Let \(P'\) be a shortest path joining \(u\) and \(v\), and suppose, without loss of generality, that \(P'\) contains~\(x\) (hence \(y, z \notin V(P')\)).
		By Claim~\ref{claim:even-degree-neighbors}, at least two vertices in \(N(v)\) have even degree, and hence at least
		one vertex between \(y\) and \(z\), say \(y\), has even degree.
		Therefore, \((P' + vz, \{yv\},\emptyset)\) is an FRS (see Figure~\ref{fig:m9}), a contradiction to Claim~\ref{claim:no-reducing}.

		Now we prove that \(u\) and \(v\) form a \((u, v)\)-kite.
		Suppose, for a contradiction, that \(u\) and~\(v\) do not form a \((u, v)\)-kite.
                Let $P'$ be a path given by Subclaim~\ref{claim:path-satisfing-conditions}. 
		Suppose, without loss of generality, that \(x \in V(P')\) (hence \(y, z \notin V(P')\)).
		First, suppose that \(\mathcal{P}^{u,v}_{x}=\emptyset\), and let \(G' = G -u -v -E(P') + yz\).
		Note that \(G'\) is triangle-free.
		However, \(\Hcal=(P',\emptyset,\{yvz\})\) is an FRS for \(G\) such that \(G_\Hcal=G'\) (see Figure~\ref{fig:m10}),	
		a contradiction to Claim~\ref{claim:no-reducing}.
		Now, suppose that~\(\mathcal{P}^{u,v}_{x}\neq\emptyset\) and let $Q'\in\mathcal{P}^{u,v}_{x}$ be a path such that \(V(Q') \cap V(P')=\emptyset\).
		Then, \((P' + zv + Q', \{yv\},\emptyset)\) is an FRS for \(G\) (see Figure~\ref{fig:m11}), 
		a contradiction to Claim~\ref{claim:no-reducing}.
	\end{proof}

	\begin{figure}[h]
		\centering
		\begin{subfigure}[b]{.33\linewidth}
			\centering\scalebox{.7}{\begin{tikzpicture}[scale = 0.6]

	\node (u) [squared black vertex] at ($(0:0)$) {};

	\node (label_u) [] at ($(u)+(180:.6)$) {$u$};

	\node (v) [squared black vertex] at ($(u)+(0:6)$) {};
    \node (y) [black vertex] at ($(v)+(-180:2)$) {};
    \node (x) [black vertex] at ($(y)+(90:1.8)$) {};
	\node (z) [black vertex] at ($(y)+(-90:1.8)$) {};

	\node (label_v) [] at ($(v)+(90:.6)$) {$v$};
	\node (label_x) [] at ($(x)+(90:.6)$) {$x$};
	\node (label_y) [] at ($(y)+(90:.6)$) {$y$};
	\node (label_z) [] at ($(z)+(-90:.6)$) {$z$};

	\node (anchor_y) [] at ($(y)+(170:1.5)$) {};
	\node (anchor_z) [] at ($(z)+(-170:1.5)$) {};

        \node (label_path1) [] at ($(u)+(0:1.7)+(0,1.8)$) {$P'$};

	\draw[edge, color1] (x) -- (v) (v) -- (z);
	\draw[edge, short snake, color1] (u) to [bend left = 10] (x);
	\draw[edge,  color2] (v) to  (y) ;
\end{tikzpicture}}
			\caption{}\label{fig:m9}
		\end{subfigure}%
		\hfill
		\begin{subfigure}[b]{.33\linewidth}
			\centering\scalebox{.7}{\begin{tikzpicture}[scale = 0.6]

	\node (u) [squared black vertex] at ($(0:0)$) {};
	\node (a) [black vertex] at ($(u)+(0:2.17)$) {};

	\node (label_u) [] at ($(u)+(180:.6)$) {$u$};
	\node (label_a) [] at ($(a)+(90:.6)$) {$u'$};

	\node (v) [squared black vertex] at ($(u)+(0:9)$) {};
    \node (y) [black vertex] at ($(v)+(-180:2)$) {};
    \node (x) [black vertex] at ($(y)+(90:1.8)$) {};
	\node (z) [black vertex] at ($(y)+(-90:1.8)$) {};

	\node (label_v) [] at ($(v)+(90:.6)$) {$v$};
	\node (label_x) [] at ($(x)+(90:.6)$) {$x$};
	\node (label_y) [] at ($(y)+(90:.6)$) {$y$};
	\node (label_z) [] at ($(z)+(-90:.6)$) {$z$};

	\node (anchor_y) [] at ($(y)+(170:1.5)$) {};
	\node (anchor_z) [] at ($(z)+(-170:1.5)$) {};

        \node (label_path1) [] at ($(u)+(0:4)+(0,2.5)$) {$P'$};

	\draw[edge, color1] (u) -- (a) (x) -- (v) ;
	\draw[edge, short snake, color1] (a) to [bend left = 30] (x);
	\draw[edge,  color3] (v) to  (y) (v) -- (z);
\end{tikzpicture}}
			\caption{}\label{fig:m10}
		\end{subfigure}%
		\hfill
		\begin{subfigure}[b]{.33\linewidth}
			\centering\scalebox{.7}{\begin{tikzpicture}[scale = 0.6]

	\node (u) [squared black vertex] at ($(0:0)$) {};
	\node (a) [black vertex] at ($(u)+(0:2.17)$) {};

	\node (label_u) [] at ($(u)+(180:.6)$) {$u$};
	\node (label_a) [] at ($(a)+(90:.6)$) {$u'$};

	\node (v) [squared black vertex] at ($(u)+(0:9)$) {};
    \node (y) [black vertex] at ($(v)+(-180:2)$) {};
    \node (x) [black vertex] at ($(y)+(90:1.8)$) {};
	\node (z) [black vertex] at ($(y)+(-90:1.8)$) {};
	\node (w) [black vertex] at ($(y)+(-90:.9)+(180:2)$) {};

	\node (label_v) [] at ($(v)+(90:.6)$) {$v$};
	\node (label_x) [] at ($(x)+(0:.6)$) {$x$};
	\node (label_y) [] at ($(y)+(90:.6)$) {$y$};
	\node (label_z) [] at ($(z)+(-90:.6)$) {$z$};

        \node (label_path1) [] at ($(u)+(0:4)+(0,2.5)$) {$P'$};
        \node (label_path2) [] at ($(w)+(50:1.5)$) {$Q'$};

	\draw[edge, color1] (u) -- (a) (x) -- (v) (v) -- (z) (z) -- (w) (w) -- (y);
	\draw[edge, short snake, color1] (a) to [bend left = 30] (x);
	\draw[edge,  color2] (v) to  (y);
\end{tikzpicture}}
			\caption{}\label{fig:m11}
		\end{subfigure}%
		\caption{Illustrations for the proof of Subclaim~\ref{claim:degree-1-kite}.
		        Each figure illustrates a reducing scheme $(H,\Acal,\Lcal)$ where $H$ is illustrated in blue, 
				the paths in $\mathcal{A}$ are illustrated in red,
				and the paths in $\Lcal$ are illustrated in green.}
		\label{fig:subclaim81}
	\end{figure}
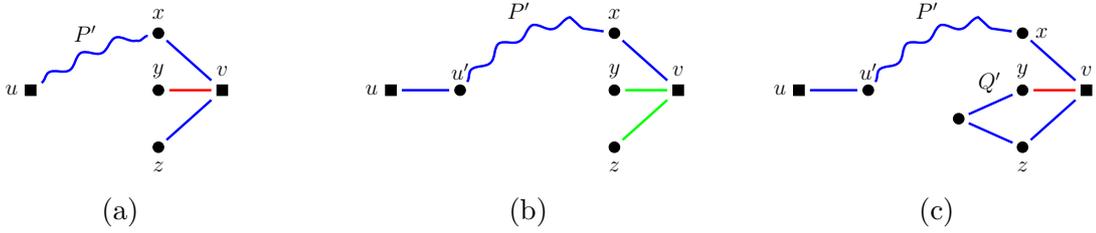

	Now, we complete the proof of Claim~\ref{claim:path-satisfing-conditions}.
	For that, fix a planar drawing of \(G\).
	Given a terminal \(v \in V(G) \setminus \{u\}\), we have \(N(v)\subseteq N(u')\).
	For every two neighbors, say \(x,y\) of \(v\), the cycle \(vxu'yv\) is a square in \(G\).
	Thus, there are three squares containing \(v\) and \(u'\), say \(Q_1,Q_2,Q_3\).
	Let \(I(X)\) denote the interior of the region bounded by the cycle \(X\). Note that
	there is some $i \in \{1,2,3\}$ such that $I(Q_j) \subset I(Q_i)$ for $j \in \{1,2,3\} \setminus \{i\}$. 
	Let  \(R_v = I(Q_i)\) and define the \emph{area} \(A(v)\) of \(R_v\) as the number of terminals contained in \(R_v\), and 
	let \(i(v)\) be the (only) neighbor of \(v\) contained in \(R_v\).

	Let \(v\) be a terminal of \(V(G) \setminus \{u\}\) that minimizes \(A(v)\), 
	and let \(N(v) = \{x,y,z\}\).
	By the minimality of \(A(v)\), we have \(A(v) = 0\), otherwise, there is a terminal \(v'\) such that \(R_{v'}\subseteq R_v\) and \(R_{v'} \neq R_v\),
	and hence $A(v') \leq  A(v)-1$.
	By Subclaim~\ref{claim:degree-1-kite}, \(x,y,z\) have odd degree.
	Suppose, without loss of generality, that \(i(v) = y\).
	Since \(G\) is triangle-free, \(x,z\notin N(y)\), and since \(y\) has odd degree, \(y\) has a neighbor \(y'\) different from \(v\) and \(u'\).
	Now, suppose that there is a path in \(G-u'-y\) joining \(y'\) to \(v\), and let \(P'\) be a shortest such path.
	We may suppose, without loss of generality, that \(P'\) contains \(x\).
	By the minimality of \(P'\), it does not contain \(z\).
	Hence \((yy' + P' + vzu'u, \{yv\},\emptyset)\)  is an FRS for \(G\), a contradiction to Claim~\ref{claim:no-reducing}.
	Thus, we may assume that every path in \(G\) joining \(y'\) to \(v\) contains~\(u'\) or~\(y\).
	Let \(C\) be the component of \(G-u'-y\) containing \(y'\), and let \(G' = G[V(C) \cup \{u', y\}]\).
	Note that \(C\) is contained~\(R_v\).
	We claim that \(G'\) contains a terminal of \(G\).
	Let~\(n'\) be the number of vertices of \(G'\), and let \(t'\) be the number of vertices of \(G'\) with degree at most~\(3\) in \(G'\).
	By Euler's formula, we have \(4n'-8 \geq t' + 4(n'-t')\), hence \(3t' \geq 8\) and \(t'\geq 3\).
	Thus, there is at least one vertex, say \(v'\), in \(G'\) different from \(u'\) and \(y\), with degree at most~\(3\).
	Note that \(v'\in V(C)\subseteq R_v\).
	However, \(d_G(v')=d_{G'}(v') \leq 3\), and hence \(v'\) is a terminal of \(G\), a contradiction to the minimality of \(A(v)\).
\end{proof}

Note that since \(G\) has no terminal with degree \(1\), Claim~\ref{claim:no-useful-cut-vertex} guarantees that \(G\) has no cut-vertex, i.e.,
\(G\) is \(2\)-connected.

\begin{claim}\label{claim:no-terminal-degree-2}
	No terminal has degree \(2\).
\end{claim}

\begin{proof}
	Suppose, for a contradiction, that \(G\) contains a terminal \(u\) with degree \(2\), 
	and
	let~\(v\) be another terminal of \(G\).
	By Claim~\ref{claim:few-small-vertices}, \(d(v)=3\), and hence let \(N(v) = \{x,y,z\}\).
	First, suppose that every neighbor of \(v\) has odd degree.
	Since \(G\) is \(2\)-connected, there is a cycle in \(G\) containing \(u\) and \(v\).
	Let \(C\) be such a cycle that minimizes \(|E(C)|\).
	Suppose, without loss of generality, that \(C\) contains~\(x\) and \(y\).
	By the minimality of \(|E(C)|\), we have \(z\notin V(C)\).
	Thus, \((C - yv+vz, \{yv\},\emptyset)\) is an FRS for \(G\) (see Figure~\ref{fig:m13}), 
	a contradiction to Claim~\ref{claim:no-reducing}.

	Thus, we may suppose that \(v\) has a neighbor with even degree.
	By Claim~\ref{claim:even-degree-neighbors}, there exist at least two vertices in \(N(v)\) with even degree.
	Let \(N(u) = \{a,b\}\).
	Suppose that~\(u\) and~\(v\) have at most one common neighbor, and let \(P\) be a shortest path joining~\(u\) and~\(v\).
	Suppose, without loss of generality, that  \(a,x \in V(P)\) (possibly \(a=x\)) and that~\(z\) has even degree.
	Note that by the minimality of \(P\) we have \(b,y \notin V(P)\) and, by the assumption that~\(u\) and~\(v\) 
	have at most one common neighbor, we have \(b \neq y\).
	Hence, \((bu + P + vy, \{zv\},\emptyset)\) is an FRS for \(G\) (see Figure~\ref{fig:m14}), 
	a contradiction to Claim~\ref{claim:no-reducing}.
 	\begin{figure}[h]
 		\centering
 		\begin{subfigure}[b]{.33\linewidth}
 			\centering\scalebox{.7}{\begin{tikzpicture}[scale = 0.6]

	\node (u) [squared black vertex] at ($(0:0)$) {};
	\node (a) [black vertex] at ($(u)+(0:2.17)+(90:2)$) {};
	\node (b) [black vertex] at ($(u)+(0:2.17)+(-90:2)$) {};

	\node (label_u) [] at ($(u)+(180:.6)$) {$u$};
	\node (label_a) [] at ($(a)+(90:.6)$) {$a$};
	\node (label_b) [] at ($(b)+(-90:.6)$) {$b$};

	\node (v) [squared black vertex] at ($(u)+(0:8)$) {};
	\node (x) [black vertex] at ($(v)+(180:2.17)+(90:2)$) {};
	\node (y) [black vertex] at ($(v)+(-180:2.17)$) {};
	\node (z) [black vertex] at ($(v)+(180:2.17)+(-90:2)$) {};

	\node (label_v) [] at ($(v)+(90:.6)$) {$v$};
	\node (label_x) [] at ($(x)+(0:.6)$) {$x$};
	\node (label_y) [] at ($(y)+(90:.6)$) {$y$};
	\node (label_z) [] at ($(z)+(-90:.6)$) {$z$};

	\draw[edge, color1] (b) -- (u) -- (a) (x) -- (v) (v) -- (z);
	\draw[edge, short snake, color1] (a) to  (x);
	\draw[edge, short snake, color1] (b) to  (y);
	\draw[edge, color2]   (v) -- (y);
\end{tikzpicture}}
 			\caption{}\label{fig:m13}
 		\end{subfigure}%
 		\hfill
 		\begin{subfigure}[b]{.33\linewidth}
 			\centering\scalebox{.7}{\begin{tikzpicture}[scale = 0.6]

	\node (u) [squared black vertex] at ($(0:0)$) {};
	\node (a) [black vertex] at ($(u)+(0:2.17)+(90:2)$) {};
	\node (b) [black vertex] at ($(u)+(0:2.17)+(-90:2)$) {};

	\node (label_u) [] at ($(u)+(180:.6)$) {$u$};
	\node (label_a) [] at ($(a)+(90:.6)$) {$a$};
	\node (label_b) [] at ($(b)+(-90:.6)$) {$b$};

	\node (v) [squared black vertex] at ($(u)+(0:8)$) {};
	\node (x) [black vertex] at ($(v)+(180:2.17)+(90:2)$) {};
	\node (y) [black vertex] at ($(v)+(-180:2.17)$) {};
	\node (z) [black vertex] at ($(v)+(180:2.17)+(-90:2)$) {};

	\node (label_v) [] at ($(v)+(90:.6)$) {$v$};
	\node (label_x) [] at ($(x)+(0:.6)$) {$x$};
	\node (label_y) [] at ($(y)+(90:.6)$) {$y$};
	\node (label_z) [] at ($(z)+(-90:.6)$) {$z$};

        \node (label_path1) [] at ($(u)+(0:4)+(0,2.5)$) {$P$};

	\draw[edge, color1] (b) -- (u) -- (a) (x) -- (v) -- (y);
	\draw[edge, short snake, color1] (a) to  (x);
	\draw[edge, color2] (v) -- (z);
\end{tikzpicture}}
 			\caption{}\label{fig:m14}
 		\end{subfigure}%
 		\hfill
 		\begin{subfigure}[b]{.33\linewidth}
 			\centering\scalebox{.7}{\begin{tikzpicture}[scale = 0.6]

	\node (u) [squared black vertex] at ($(0:0)$) {};

	\node (label_u) [] at ($(u)+(180:.6)$) {$u$};

	\node (v) [squared black vertex] at ($(u)+(0:6)$) {};
	\node (y) [black vertex] at ($(u)+(0:3)$) {};
	\node (x) [black vertex] at ($(y)+(90:2)$) {};
	\node (z) [black vertex] at ($(y)+(-90:2)$) {};

	\node (label_v) [] at ($(v)+(90:.6)$) {$v$};
	\node (label_x) [] at ($(x)+(90:.6)$) {$a = x$};
	\node (label_y) [] at ($(y)+(90:.6)$) {$b = y$};
	\node (label_z) [] at ($(z)+(-90:.6)$) {$z$};

        \node (label_path2) [] at ($(x)+(0:1.7)+(0, -0.5)$) {$P$};

	\draw[edge, color1] (u) -- (x) -- (v) -- (y);
	\draw[edge, short snake, color1] (y) to  (z);
	\draw[edge, color2] (u) -- (y);
	\draw[edge, color2] (z) -- (v);
\end{tikzpicture}}
 			\caption{}\label{fig:m15}
 		\end{subfigure}%
 		\caption{Each figure illustrates a reducing scheme $(H,\Acal,\Lcal)$ where $H$ is illustrated in blue, 
				and the paths in $\mathcal{A}$ are illustrated in red.}
 		\label{fig:case}
 	\end{figure}
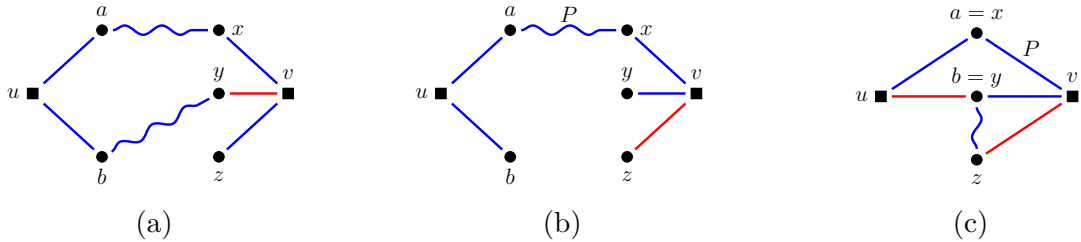

	Therefore, we may assume that \(u\) and \(v\) have precisely two common neighbors, say \(a=x\) and \(b=y\).
	First, suppose that \(z\) has even degree.
	Let \(G' = G -u -v\), and note that, by the minimality of \(G\), \(G'\) is a Gallai graph.
	Let \(\D'\) be a minimum path decomposition of~\(G'\).
	Note that \(z\) has odd degree in \(G'\), and let \(P'_z\) be the path in \(\D'\) having \(z\) as an end vertex.
	Let \(Q = uxvyu\) and \(P_z = P'_z + zv\).
	By Lemma~\ref{lemma:cycle+path}, \(P_z + Q\) can be decomposed into two paths.
	Therefore, there is a path decomposition of $G$ of cardinality $|\D'| + 1 \leq \lfloor n/2 \rfloor$, and hence $G$ is a
	Gallai graph, a contradiction.
	Now, suppose that \(z\) has odd degree, and hence \(x\) and \(y\) have even degree.
	Since \(G\) is 2-connected, there is a path in \(G-v\) joining \(z\) to a vertex in \(\{x,y\}\).
	Let \(P\) be a shortest such path.
	We may assume, without loss of generality, that \(P\) contains \(y\) but does not contain \(x\), also note that \(u \notin V(P)\).
	Hence, \((P + yvxu, \{yu, zv\},\emptyset)\) is an FRS for~\(G\) (see Figure~\ref{fig:m15}), 
	a contradiction to Claim~\ref{claim:no-reducing}.
\end{proof}

By Claim~\ref{claim:no-terminal-degree-1} and~\ref{claim:no-terminal-degree-2}, every terminal of \(G\) has degree \(3\).

\begin{claim}\label{property:three-common-neighbors}
	No two terminals have all three neighbors in common.
\end{claim}

\begin{proof}
	Suppose, for a contradiction, that \(u\) and \(v\) are terminals with \(N(u) = N(v)=\{x,y,z\}\).
	Let \(G' = G-u-v\). By the minimality of \(G\), the graph \(G'\) is a Gallai graph.
	By Claim~\ref{claim:no-terminal-degree-2}, the vertices in \(\{x,y,z\}\) are not isolated in \(G'\).
	Let \(\D'\) be a minimum path decomposition of \(G'\),
	and let \(P'\) be a path in \(\D'\) containing a vertex in \(\{x,y,z\}\).
	We claim that \(P'\) can be splitted into two (possible empty) paths \(Y'\) and \(Z'\) such that \(V(Y') \cap V(Z') = \{x'\}\), \(y' \notin V(Y')\), \(z' \notin V(Z')\), and \(\{x', y', z'\} = \{x, y, z\}\).
	If \(P'\) contains exactly one vertex in $\{x,y,z\}$, say \(x\), then let \(P' = P_1 x P_2\), 
	and note that \(Y' = P_1x\), \(Z' = xP_2\) satisfy the claim with \(x' = x\), \(y' = y\) and \(z'=z\).
	If \(P'\) contains exactly two vertices in $\{x,y,z\}$, say \(x,y\), then let \(P' = P_1 x P_2 y P_3\),  
	and note that \(Y' = P_1x\), \(Z' = xP_2yP_3\) satisfy the claim with \(x'=x\), \(y' = y\) and \(z'=z\).
	Finally, if \(P'\) contains the three vertices in $\{x,y,z\}$, then let \(P' = P_1 x P_2 y P_3 z P_4\), 
	and note that  \(Y' = P_1 x P_2 y\), \(Z' = yP_3zP_4\) satisfy the claim with \(x'=y\), \(y' = z\) and \(z'=x\).
    Therefore, the claim holds.
	Now, let \(Y = Y' + x'uy'v\) and \(Z = Z' + x'vz'u\), 
	and note that \(\big(\D'\cup\{Y,Z\}\big)\setminus\{P'\}\) 
	is a path decomposition of \(G\) (see Figure~\ref{fig:m16}) 
	of cardinality \(|\D'| + 1 \leq\lfloor n/2\rfloor\),
	and hence $G$ is a Gallai graph, a contradiction.
\end{proof}
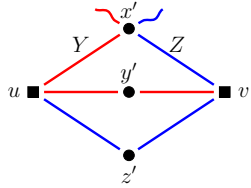
\begin{figure}[h]
	\centering\scalebox{.7}{\begin{tikzpicture}[scale = 0.6]

	\node (u) [squared black vertex] at ($(0:0)$) {};

	\node (label_u) [] at ($(u)+(180:.6)$) {$u$};

	\node (v) [squared black vertex] at ($(u)+(0:6)$) {};
	\node (y) [black vertex] at ($(u)+(0:3)$) {};
	\node (x) [black vertex] at ($(y)+(90:2)$) {};
	\node (z) [black vertex] at ($(y)+(-90:2)$) {};

	\node (anchor_xa) [] at ($(x)+(30:1.5)$) {};
	\node (anchor_xb) [] at ($(x)+(150:1.5)$) {};

	\node (label_v) [] at ($(v)+(0:.6)$) {$v$};
	\node (label_x) [] at ($(x)+(90:.6)$) {$x'$};
	\node (label_y) [] at ($(y)+(90:.6)$) {$y'$};
	\node (label_z) [] at ($(z)+(-90:.6)$) {$z'$};

        \node (label_path1) [] at ($(u)+(1.5,1.5)$) {$Y$};
        \node (label_path1) [] at ($(v)+(-1.5,1.5)$) {$Z$};

	\draw[edge, color1] (x) -- (v) -- (z) -- (u);
	\draw[edge, color2] (x) -- (u) -- (y) -- (v);
	\draw[edge, short snake, color1] (anchor_xa) -- (x);
	\draw[edge, short snake, color2] (anchor_xb) -- (x);
\end{tikzpicture}}
	\caption{Illustration of the proof of Claim~\ref{property:three-common-neighbors}.
	        The paths $Y$ and $Z$ are illustrated, respectively, in red and blue.}\label{fig:m16}
\end{figure}

We say that a separator \(S\) in \(G\) is a \emph{terminal separator} if \(|S|=2\) and $S$ separates two terminals in \(G\).
If \(S\) is a terminal separator and \(C\) is a component of \(G-S\) that contains a terminal of \(G\), then we say that \(C\) is a \emph{terminal component} of \(G-S\).

\begin{claim}\label{claim:terminal-separator}
	Every pair of terminals is separated by a terminal separator.
\end{claim}

\begin{proof}
    Suppose, for a contradiction, that there is no terminal separator that separates the terminals~\(u\) and~\(v\).
    By Claim~\ref{property:three-common-neighbors}, \(u\) and \(v\) have at most two common neighbors.
    In what follows, the proof is divided into two cases, depending on whether \(u\) and \(v\) have at most
    one common neighbor or precisely two common neighbors.
    
    First, suppose that \(u\) and \(v\) have at most one common neighbor.
	Since \(G\) is \(2\)-connected, every separator that separates \(u\) and \(v\) must have size at least~\(3\).
	Thus, there are three internally vertex-disjoint paths, say \(P\), \(Q\), \(R\), joining \(u\) and \(v\).
	Since \(u\) and \(v\) have at most one common neighbor, 
	at most one of the paths \(P\), \(Q\) and \(R\) has length~\(2\).
	Let \(N(u) = \{a,b,c\}\) and \(N(v) = \{x,y,z\}\), where, possibly, \(a=x\).
	Suppose, without loss of generality, that \(a,x\in V(P)\), \(b,y\in V(Q)\), and \(c,z\in V(R)\).
	Suppose that both \(u\) and \(v\) have neighbors with even degree.
	By Claim~\ref{claim:even-degree-neighbors}, we may assume, without loss of generality, that \(c\) and \(z\) have even degree.
	Thus, \((bu + P + vy, \{cu, zv\},\emptyset)\) is an FRS for \(G\) (see Figure~\ref{fig:m25}), 
	a contradiction to Claim~\ref{claim:no-reducing}.
    Thus, we may assume, without loss of generality, that all the neighbors of \(v\) have odd degree.
	Suppose that at least one, say \(c\), in \(\{a,b,c\}\) has even degree.
	Thus, \((P + Q - xv+vz, \{xv, cu\},\emptyset)\) is an FRS for \(G\) (see Figure~\ref{fig:m26}), 
	a contradiction to Claim~\ref{claim:no-reducing}.
	Hence, we may assume that \(a\), \(b\) and \(c\) have odd degree.
	Therefore, \((P + Q + R -au - zv, \{au, zv\},\emptyset)\) is an FRS for \(G\) (see Figure~\ref{fig:m27}), 
	a contradiction to Claim~\ref{claim:no-reducing}.  
    
    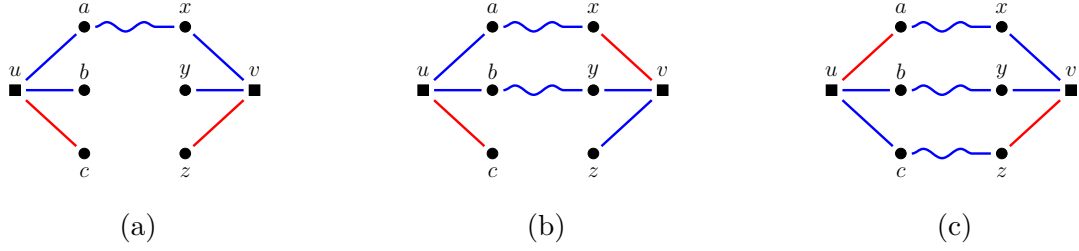
\begin{figure}[h]
    	\centering
    	\begin{subfigure}[b]{.33\linewidth}
    		\centering\scalebox{.7}{\begin{tikzpicture}[scale = 0.6]

	\node (u) [squared black vertex] at ($(0:0)$) {};
	\node (a) [black vertex] at ($(u)+(0:2.17)+(90:2)$) {};
	\node (b) [black vertex] at ($(u)+(0:2.17)$) {};
	\node (c) [black vertex] at ($(u)+(0:2.17)+(-90:2)$) {};

	\node (label_u) [] at ($(u)+(90:.6)$) {$u$};
	\node (label_a) [] at ($(a)+(90:.6)$) {$a$};
	\node (label_b) [] at ($(b)+(90:.6)$) {$b$};
	\node (label_c) [] at ($(c)+(-90:.6)$) {$c$};

	\node (v) [squared black vertex] at ($(u)+(0:7.5)$) {};
	\node (y) [black vertex] at ($(v)+(-180:2.17)$) {};
	\node (x) [black vertex] at ($(y)+(90:2)$) {};
	\node (z) [black vertex] at ($(y)+(-90:2)$) {};

	\node (label_v) [] at ($(v)+(90:.6)$) {$v$};
	\node (label_x) [] at ($(x)+(90:.6)$) {$x$};
	\node (label_y) [] at ($(y)+(90:.6)$) {$y$};
	\node (label_z) [] at ($(z)+(-90:.6)$) {$z$};

	\node (anchor_ya) [] at ($(y)+(-60:1.5)$) {};
	\node (anchor_yb) [] at ($(y)+(-120:1.5)$) {};
	\node (anchor_c) [] at ($(c)+(-45:1.5)$) {};
	\node (anchor_z) [] at ($(z)+(-135:1.5)$) {};
	\node (anchor_x) [] at ($(x)+(0:1.5)$) {};
        
%        \node (label_path1) [] at ($(u)+(0:3.75)+ (90:2.7)$) {$P$};

        \draw[edge, short snake, color1]  (a) -- (x);
        \draw[edge, color1]  (u) -- (a)  (v) -- (y) (b) -- (u) (x) -- (v) ;
        \draw[edge, color2] (u) -- (c) ;
        \draw[edge, color2] (v) -- (z) ;

\end{tikzpicture}}
    		\caption{}\label{fig:m25}
    	\end{subfigure}%
    	\hfill
     	\begin{subfigure}[b]{.33\linewidth}
    		\centering\scalebox{.7}{\begin{tikzpicture}[scale = 0.6]

	\node (u) [squared black vertex] at ($(0:0)$) {};
	\node (a) [black vertex] at ($(u)+(0:2.17)+(90:2)$) {};
	\node (b) [black vertex] at ($(u)+(0:2.17)$) {};
	\node (c) [black vertex] at ($(u)+(0:2.17)+(-90:2)$) {};

	\node (label_u) [] at ($(u)+(90:.6)$) {$u$};
	\node (label_a) [] at ($(a)+(90:.6)$) {$a$};
	\node (label_b) [] at ($(b)+(90:.6)$) {$b$};
	\node (label_c) [] at ($(c)+(-90:.6)$) {$c$};

	\node (v) [squared black vertex] at ($(u)+(0:7.5)$) {};
	\node (y) [black vertex] at ($(v)+(-180:2.17)$) {};
	\node (x) [black vertex] at ($(y)+(90:2)$) {};
	\node (z) [black vertex] at ($(y)+(-90:2)$) {};

	\node (label_v) [] at ($(v)+(90:.6)$) {$v$};
	\node (label_x) [] at ($(x)+(90:.6)$) {$x$};
	\node (label_y) [] at ($(y)+(90:.6)$) {$y$};
	\node (label_z) [] at ($(z)+(-90:.6)$) {$z$};

	\node (anchor_ya) [] at ($(y)+(-60:1.5)$) {};
	\node (anchor_yb) [] at ($(y)+(-120:1.5)$) {};
	\node (anchor_c) [] at ($(c)+(-45:1.5)$) {};
	\node (anchor_z) [] at ($(z)+(-135:1.5)$) {};
	\node (anchor_x) [] at ($(x)+(0:1.5)$) {};

        \draw[edge, short snake, color1]  (a) -- (x);
        \draw[edge, short snake, color1]  (b) -- (y);
        \draw[edge, color1]  (u) -- (a)  (v) -- (y) (b) -- (u) (v) -- (z) ;
        \draw[edge, color2] (x) -- (v) ;
        \draw[edge, color2] (u) -- (c) ;

%        \node (label_path1) [] at ($(u)+(0:3.75)+ (90:2.7)$) {$P$};
%        \node (label_path1) [] at ($(u)+(0:3.75)+ (90:0.7)$) {$Q$};

\end{tikzpicture}}
     		\caption{}\label{fig:m26}
    	\end{subfigure}%
    	\hfill
    	\begin{subfigure}[b]{.33\linewidth}
    		\centering\scalebox{.7}{\begin{tikzpicture}[scale = 0.6]

	\node (u) [squared black vertex] at ($(0:0)$) {};
	\node (a) [black vertex] at ($(u)+(0:2.17)+(90:2)$) {};
	\node (b) [black vertex] at ($(u)+(0:2.17)$) {};
	\node (c) [black vertex] at ($(u)+(0:2.17)+(-90:2)$) {};

	\node (label_u) [] at ($(u)+(90:.6)$) {$u$};
	\node (label_a) [] at ($(a)+(90:.6)$) {$a$};
	\node (label_b) [] at ($(b)+(90:.6)$) {$b$};
	\node (label_c) [] at ($(c)+(-90:.6)$) {$c$};

	\node (v) [squared black vertex] at ($(u)+(0:7.5)$) {};
	\node (y) [black vertex] at ($(v)+(-180:2.17)$) {};
	\node (x) [black vertex] at ($(y)+(90:2)$) {};
	\node (z) [black vertex] at ($(y)+(-90:2)$) {};

	\node (label_v) [] at ($(v)+(90:.6)$) {$v$};
	\node (label_x) [] at ($(x)+(90:.6)$) {$x$};
	\node (label_y) [] at ($(y)+(90:.6)$) {$y$};
	\node (label_z) [] at ($(z)+(-90:.6)$) {$z$};

	\node (anchor_ya) [] at ($(y)+(-60:1.5)$) {};
	\node (anchor_yb) [] at ($(y)+(-120:1.5)$) {};
	\node (anchor_c) [] at ($(c)+(-45:1.5)$) {};
	\node (anchor_z) [] at ($(z)+(-135:1.5)$) {};
	\node (anchor_x) [] at ($(x)+(0:1.5)$) {};

        \draw[edge, short snake, color1]  (a) -- (x);
        \draw[edge, short snake, color1]  (b) -- (y);
        \draw[edge, short snake, color1]  (c) -- (z);
        \draw[edge, color1]  (x) -- (v) -- (y) (b) -- (u) -- (c);

        \draw[edge, color2]  (u) -- (a);
        \draw[edge, color2]  (v) -- (z);

\end{tikzpicture}}
    		\caption{}\label{fig:m27}
    	\end{subfigure}%
    	\caption{Each figure illustrates a reducing scheme $(H,\Acal,\Lcal)$ where $H$ is illustrated in blue, 
                 and the paths in $\mathcal{A}$ are illustrated in red.}
    	\label{fig:case4}
    \end{figure}    
    
    Now, suppose that \(u\) and \(v\) have precisely two common neighbors, say~\(x\) and~\(y\), 
    and let \(c\) and~\(z\) be the remaining neighbors of \(u\) and \(v\), respectively.
    Suppose that every vertex in \(N(u)\cup N(v)\) has odd degree. 
    If there is a path \(P\) in \(G - x - y\) joining \(u\) and \(v\), then \((yu + P + vx, \{xu, yv\},\emptyset)\) is an 
    FRS for \(G\) (see Figure~\ref{fig:m21}), a contradiction to Claim~\ref{claim:no-reducing}. 
    Therefore, \(\{x,y\}\) is a terminal separator that separates \(u\) and \(v\), a contradiction.

%        We claim that every vertex in \(N(u)\cup N(v)\) has odd degree.
		Thus, we may assume that \(u\) or \(v\), say \(u\), has a neighbor with even degree.
		By Claim~\ref{claim:even-degree-neighbors}, \(u\) has at least two neighbors with even degree.
		Thus, we may assume that \(x\) has even degree and, again by Claim~\ref{claim:even-degree-neighbors}, 
		the vertex~\(v\) has at least two neighbors with even degree.
		Suppose that \(c\) and  $z$  have even degree.
		Let \(G' = G - u -v\), and hence, by the minimality of \(G\), \(G'\) is a Gallai graph.
		Let \(\D'\) be a minimum path decomposition of \(G'\).
		Note that \(c\) and $z$ have odd degree in \(G'\), and let \(P'_c\) and $P'_z$ be the paths in \(\D'\) 
		having \(c\) and $z$ as end vertices, respectively (possibly $P'_c = P'_z$).
		Put \(Q' = uyvxu\), \(P_c= P'_c + cu\) and \(P_z = P'_z + zv\) (if \(P'_c = P'_z\), then put \(P_c = P_z = P'_c + cu + zv\)).
		By Lemma~\ref{lemma:cycle+path}, \(P_c + Q'\) can be decomposed into two paths, say \(P\) and \(Q\).
		Therefore, \(\D = (\D' \cup \{P, Q, P_z\}) \setminus \{P'_c, P'_z\} \) is a path decomposition of \(G\) 
		\big(if \(P'_c = P'_z\), then put \(\D = (\D'\cup\{P,Q\})\setminus\{P'_c\}\)\big)
		such that \(|\D| = |\D'| + 1 \leq \floor{n/2}\), and hence $G$ is a Gallai graph, a contradiction.

		Thus, we  may assume that at least one between \(c\) and \(z\), say \(c\), has odd degree, and hence, by Claim~\ref{claim:even-degree-neighbors}, \(y\) must have even degree.
		If there is no path of length \(2\) in \(G-u-v\) joining \(c\) to \(y\),
		then \(G' = G - u - v +cy\) has no triangle.
		Note that \(d_{G'}(c) = d_G(c)\) and \(d_{G'}(y) = d_G(y)-1\), hence \(c\) and \(y\) have odd degree in \(G'\).
		Thus, \((uxvz,\{yv\},\{cuy\})\) is an FRS for \(G\) such that \(G_\Hcal = G'\) (see Figure~\ref{fig:m17}),
		a contradiction to Claim~\ref{claim:no-reducing}.
		Now, suppose that there is a path \(cwy\) of length \(2\) in \(G-u-v\) joining \(c\) to \(y\).
		Since \(G\) is triangle-free, \(w \neq x, z\).
		Therefore, \((cwyuxvz, \{cu, yv\},\emptyset)\) is an FRS for \(G\) (see Figure~\ref{fig:m18}), 
		a contradiction to Claim~\ref{claim:no-reducing}.
\end{proof}
	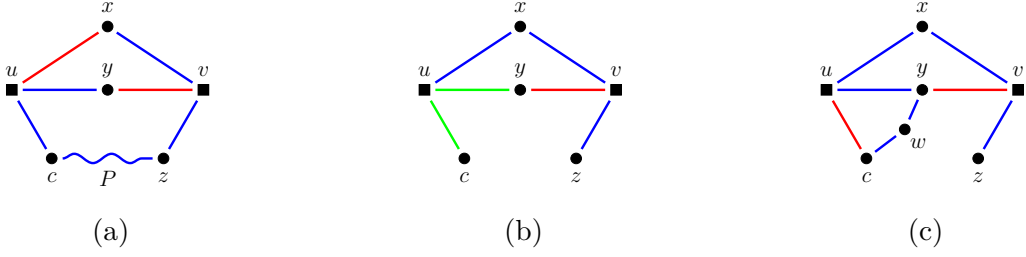
\begin{figure}[h]
		\centering
		\begin{subfigure}[b]{.33\linewidth}
			\centering\scalebox{.7}{\begin{tikzpicture}[scale = 0.6]

	\node (u) [squared black vertex] at ($(0:0)$) {};
	\node (c) [black vertex] at ($(u)+(-60:2.5)$) {};

	\node (label_u) [] at ($(u)+(90:.6)$) {$u$};
	\node (label_c) [] at ($(c)+(-90:.6)$) {$c$};

	\node (v) [squared black vertex] at ($(u)+(0:6)$) {};
	\node (y) [black vertex] at ($(u)+(0:3)$) {};
	\node (x) [black vertex] at ($(y)+(90:2)$) {};
	\node (z) [black vertex] at ($(v)+(240:2.5)$) {};

	\node (label_v) [] at ($(v)+(90:.6)$) {$v$};
	\node (label_x) [] at ($(x)+(90:.6)$) {$x$};
	\node (label_y) [] at ($(y)+(90:.6)$) {$y$};
	\node (label_z) [] at ($(z)+(-90:.6)$) {$z$};

	\node (anchor_ya) [] at ($(y)+(-60:1.5)$) {};
	\node (anchor_yb) [] at ($(y)+(-120:1.5)$) {};
	\node (anchor_c) [] at ($(c)+(-45:1.5)$) {};
	\node (anchor_z) [] at ($(z)+(-135:1.5)$) {};

        \draw[edge, color2] (u) -- (x);
        \draw[edge, color2] (y) -- (v);
        \draw[edge, color1]  (c) -- (u) -- (y) (z) -- (v) -- (x);
        \draw[edge, short snake, color1]  (c) to (z);

     \node (label_Pp) [] at ($(u)+(0:3)+(-90:2.8)$) {$P$};
\end{tikzpicture}}
			\caption{}\label{fig:m21}
		\end{subfigure}		
		\begin{subfigure}[b]{.33\linewidth}
			\centering\scalebox{.7}{\begin{tikzpicture}[scale = 0.6]

	\node (u) [squared black vertex] at ($(0:0)$) {};
	\node (c) [black vertex] at ($(u)+(-60:2.5)$) {};

	\node (label_u) [] at ($(u)+(90:.6)$) {$u$};
	\node (label_c) [] at ($(c)+(-90:.6)$) {$c$};

	\node (v) [squared black vertex] at ($(u)+(0:6)$) {};
	\node (y) [black vertex] at ($(u)+(0:3)$) {};
	\node (x) [black vertex] at ($(y)+(90:2)$) {};
	\node (z) [black vertex] at ($(v)+(240:2.5)$) {};

	\node (label_v) [] at ($(v)+(90:.6)$) {$v$};
	\node (label_x) [] at ($(x)+(90:.6)$) {$x$};
	\node (label_y) [] at ($(y)+(90:.6)$) {$y$};
	\node (label_z) [] at ($(z)+(-90:.6)$) {$z$};

	\node (anchor_ya) [] at ($(y)+(-60:1.5)$) {};
	\node (anchor_yb) [] at ($(y)+(-120:1.5)$) {};
	\node (anchor_c) [] at ($(c)+(-45:1.5)$) {};

        \draw[edge, color1] (u) -- (x) -- (v) -- (z);
        \draw[edge, color3] (y) -- (u) -- (c);
        \draw[edge, color2] (y) -- (v);
\end{tikzpicture}}
			\caption{}\label{fig:m17}
		\end{subfigure}%
		\begin{subfigure}[b]{.33\linewidth}
			\centering\scalebox{.7}{\begin{tikzpicture}[scale = 0.6]

	\node (u) [squared black vertex] at ($(0:0)$) {};
	\node (c) [black vertex] at ($(u)+(-60:2.5)$) {};

	\node (label_u) [] at ($(u)+(90:.6)$) {$u$};
	\node (label_c) [] at ($(c)+(-90:.6)$) {$c$};

	\node (v) [squared black vertex] at ($(u)+(0:6)$) {};
	\node (y) [black vertex] at ($(u)+(0:3)$) {};
	\node (x) [black vertex] at ($(y)+(90:2)$) {};
	\node (w) [black vertex] at ($(u)+(-27:2.75)$) {};
	\node (z) [black vertex] at ($(v)+(240:2.5)$) {};

	\node (label_v) [] at ($(v)+(90:.6)$) {$v$};
	\node (label_x) [] at ($(x)+(90:.6)$) {$x$};
	\node (label_y) [] at ($(y)+(90:.6)$) {$y$};
	\node (label_w) [] at ($(w)+(-45:.6)$) {$w$};
	\node (label_z) [] at ($(z)+(-90:.6)$) {$z$};

	\node (anchor_ya) [] at ($(y)+(-60:1.5)$) {};
	\node (anchor_yb) [] at ($(y)+(-120:1.5)$) {};
	\node (anchor_c) [] at ($(c)+(-45:1.5)$) {};
	\node (anchor_z) [] at ($(z)+(-135:1.5)$) {};

    \draw[edge, color1]  (c) -- (w) -- (y) -- (u) -- (x) -- (v) -- (z);
    \draw[edge, color2] (c) -- (u);
    \draw[edge, color2] (y) -- (v);

\end{tikzpicture}}
			\caption{}\label{fig:m18}
		\end{subfigure}%
		\caption{Each figure illustrates a reducing scheme $(H,\Acal,\Lcal)$ where $H$ is illustrated in blue, 
				the paths in $\mathcal{A}$ are illustrated in red,
				and the paths in $\Lcal$ are illustrated in green.}
		\label{fig:case5}
	\end{figure}

Finally, we have enough structure to conclude the proof of Theorem~\ref{theo:main}. 
By Claim~\ref{claim:six-terminal-vertices}, \(G\) contains at least two terminals and hence, by 
Claim~\ref{claim:terminal-separator}, \(G\) contains a terminal separator.
Given a terminal separator \(S'\), let \(\eta(S')\) be the smallest order (number of vertices) of a terminal component of \(G - S'\).
Let \(S=\{x,y\}\) be a terminal separator of \(G\) that minimizes \(\eta(S)\).
Let \(G'_1,\ldots,G'_k\) be the components of \(G-S\), and let \(G_i = G\big[V(G'_i) \cup S\big]\), for \(i=1,\ldots,k\).
Suppose, without loss of generality, that \(|V(G'_1)| = \eta(S)\) and let \(u\) be a terminal of \(G\) in \(G'_1\), and hence in \(G_1\).
We claim that \(G_1'\) contains another terminal of \(G\).
Indeed, since \(G_1\) is a triangle-free planar graph, we have \(2|E(G_1)| \leq 4|V(G_1)| - 8\).
On the other hand, we have \(2|E(G_1)| = \sum_{v\in V(G_1)} d_{G_1}(v)\).
Let \(R\) be the set of vertices of \(G_1\) different from \(u\) and with degree at most \(3\) in \(G_1\).
We have
\begin{align*}
\sum_{v\in V(G_1)} d_{G_1}(v)
&\quad\geq\quad d_{G_1}(u) + \sum_{v\in R}d_{G_1}(v) + \sum_{v\notin R, v\neq u} d_{G_1}(v) \\
&\quad\geq\quad 3 + |R| + 4(|V(G_1)|-|R|-1) \\
&\quad=\quad 4|V(G_1)| - 3|R| -1
\end{align*}
Thus, we have \(3|R| \geq 7\), and hence \(|R| \geq 3\), which implies that there is a vertex, say \(v\), in~\(R \setminus S\).
Thus, \(d_G(v) = d_{G_1}(v) \leq 3\), and hence \(v\) is a terminal of \(G\).
Since~\(u\) and~\(v\) are two terminals of \(G\), by Claim~\ref{claim:terminal-separator},  there is a terminal separator \(S' = \{x',y'\}\) that separates \(u\) and \(v\).
Since \(u\) and \(v\) are in the same component of \(G-S\), at least one of the vertices of~\(S'\), say~\(x'\), must be in \(G'_1\).
The rest of the proof is divided into two cases, depending on whether \(y'\notin V(G_1')\) or \(y'\in V(G'_1)\).

First, suppose that \(y'\notin V(G_1')\).
Then \(G'_1 - x'\) is disconnected and \(u\) and \(v\) are in different components of \(G'_1 - x'\).
Let \(C_u\) and \(C_v\) be the components of \(G'_1-x'\) containing, respectively, \(u\) and~\(v\).
Note that if \(x\) has neighbors in both \(V(C_u)\) and \(V(C_v)\),
then we can obtain a path \(uP_1xP_2v\), where \(V(P_1)\subseteq V(C_u)\) and \(V(P_2)\subseteq V(C_v)\),
and hence \(y'=x\), otherwise \(S'\) would not separate \(u\) and \(v\).
Analogously, if \(y\) has neighbors in both \(C_u\) and \(C_v\), then \(y'=y\).
Thus, since \(x\neq y\), we may suppose, without loss of generality, that \(y'\neq x\),
and hence \(x\) has no neighbors in both \(C_u\) and \(C_v\).
Suppose that \(x\) has no neighbors in \(C_u\),  and let \(S'' = \{x',y\}\).
Thus, \(C_u\) is a component of \(G-S''\) (see Figure~\ref{fig:final1}).
Since, \(C_u\subseteq G_1'\) and \(v\notin V(C_u)\), we have that \(S''\) is a terminal separator
such that \(\eta(S'') \leq |V(C_u)| < |V(G_1')| = \eta(S)\),
a contradiction to the minimality of \(S\).

\begin{figure}[h]
	\begin{subfigure}[b]{.49\linewidth}
	\centering\scalebox{.7}{\begin{tikzpicture}[scale = 0.7]

    \draw [fill=setfilling, draw=setborder, decoration={random steps,segment length=1cm,amplitude=0cm,
    pre=lineto,pre length=.25cm,post=lineto,post length=.25cm},  decorate, rounded corners=.3cm] (-5,-1) ellipse  ({2cm} and {4.3 cm});   

       \draw [draw=black, fill=yellow, opacity=0.2]
       (-5,-1) -- (-6,0.3) -- (-4.3,.3) -- cycle;

       \draw [draw=black, fill=yellow, opacity=0.2]
       (-5,-1) -- (-6,-2.5) -- (-4.2,-2.5) -- cycle;
       
       \draw [draw=black, fill=yellow, opacity=0.2]
       (0,-3) --  ($(-5,1.5) + (90:1)$) -- ($(-5,1.5)+(-90:1)$) -- cycle;       
       
       \draw [draw=black, fill=yellow, opacity=0.2]
       (0,-3) -- ($(-5,-3.5) + (90:1)$) -- ($(-5,-3.5)+(-90:1)$) -- cycle;   
       
       \draw [draw=black, fill=yellow, opacity=0.2]
       (0,1) -- ($(-5,-3.5) + (135:1)$) -- ($(-5,-3.5)+(-45:1)$) -- cycle;        
    
        \draw [fill=setfilling, draw=setborder, decoration={random steps,segment length=1cm,amplitude=0cm,
        pre=lineto,pre length=.25cm,post=lineto,post length=.25cm},  decorate, rounded corners=.4cm] (-5,1.5) ellipse  ({1cm} and {1cm});  
        
        \draw [fill=setfilling, draw=setborder, decoration={random steps,segment length=1cm,amplitude=0cm,
        pre=lineto,pre length=.25cm,post=lineto,post length=.25cm},  decorate, rounded corners=.4cm] (-5,-3.5) ellipse  ({1cm} and {1cm});		
    
        \draw [draw=black, fill=yellow, opacity=0.2]
       (0,1) -- (5,3.5) -- (5,-5.5) -- cycle;    

        \draw [draw=black, fill=yellow, opacity=0.2]
       (0,-3) -- (5,3.5) -- (5,-5.5) -- cycle;  
    
    \draw [fill=setfilling, draw=setborder, decoration={random steps,segment length=1cm,amplitude=0cm,
    pre=lineto,pre length=.25cm,post=lineto,post length=.25cm},  decorate, rounded corners=.3cm] (5,-1) ellipse ({2cm} and {4.3 cm});

    \node (x) [black vertex] at (0,1) {};
    \node (y) [black vertex] at (0,-3) {};
    \node (x') [black vertex] at (-5,-1) {};
    \node (u) [] at ($(-5,1.3) + (135:1.2)$) {};
    \node (v) [] at ($(-5,-3.3) + (-55:1)$) {};    
  
    \node (label_x) [] at ($(x)+(90:.6)$) {$x$};
    \node (label_y) [] at ($(y)+(90:.6)$) {$y$};
    \node (label_xp) [] at ($(x')+(0:.6)$) {$x'$};
    \node (label_u) [] at ($(-5,1.3)$) {$C_u$};
    \node (label_v) [] at ($(-5,-3.3)$) {$C_v$};
    \node (label_G1) [] at (-3.2,3) {$G_1'$};
    \node (label_G2) [] at (5,-1) {$G-V(G_1)$};    

\end{tikzpicture}}
	\caption{}\label{fig:final1}
	\end{subfigure}
	\begin{subfigure}[b]{.49\linewidth}
	\centering
	\centering\scalebox{.7}{\begin{tikzpicture}[scale = 0.7]

    \draw [fill=setfilling, draw=setborder, decoration={random steps,segment length=1cm,amplitude=0cm,
    pre=lineto,pre length=.25cm,post=lineto,post length=.25cm},  decorate, rounded corners=.3cm] (-5,-1) ellipse  ({2cm} and {4.3 cm});   

       \draw [draw=black, fill=yellow, opacity=0.2]
       (-5.5,-1) -- (-6,0.3) -- (-4.3,.3) -- cycle;

       \draw [draw=black, fill=yellow, opacity=0.2]
       (-5.5,-1)-- (-6,-2.5) -- (-4.2,-2.5) -- cycle;

       \draw [draw=black, fill=yellow, opacity=0.2]
       (-4.5,-1) -- (-6,0.3) -- (-4.3,.3) -- cycle;

       \draw [draw=black, fill=yellow, opacity=0.2]
       (-4.5,-1)-- (-6,-2.5) -- (-4.2,-2.5) -- cycle;
       
       \draw [draw=black, fill=yellow, opacity=0.2]
       (0,1) --  ($(-5,1.5) + (90:1)$) -- ($(-5,1.5)+(-90:1)$) -- cycle;       
       
       \draw [draw=black, fill=yellow, opacity=0.2]
       (0,-3) -- ($(-5,-3.5) + (90:1)$) -- ($(-5,-3.5)+(-90:1)$) -- cycle;      
    
        \draw [fill=setfilling, draw=setborder, decoration={random steps,segment length=1cm,amplitude=0cm,
        pre=lineto,pre length=.25cm,post=lineto,post length=.25cm},  decorate, rounded corners=.4cm] (-5,1.5) ellipse  ({1cm} and {1cm});  
        
        \draw [fill=setfilling, draw=setborder, decoration={random steps,segment length=1cm,amplitude=0cm,
        pre=lineto,pre length=.25cm,post=lineto,post length=.25cm},  decorate, rounded corners=.4cm] (-5,-3.5) ellipse  ({1cm} and {1cm});		
    
        \draw [draw=black, fill=yellow, opacity=0.2]
       (0,1) -- (5,3.5) -- (5,-5.5) -- cycle;    

        \draw [draw=black, fill=yellow, opacity=0.2]
       (0,-3) -- (5,3.5) -- (5,-5.5) -- cycle;  
    
    \draw [fill=setfilling, draw=setborder, decoration={random steps,segment length=1cm,amplitude=0cm,
    pre=lineto,pre length=.25cm,post=lineto,post length=.25cm},  decorate, rounded corners=.3cm] (5,-1) ellipse ({2cm} and {4.3 cm});

    \node (x) [black vertex] at (0,1) {};
    \node (y) [black vertex] at (0,-3) {};
    \node (x') [black vertex] at (-5.5,-1) {};
    \node (y') [black vertex] at (-4.5,-1) {};
    
    \node (u) [] at ($(-5,1.3) + (135:1.2)$) {};
    \node (v) [] at ($(-5,-3.3) + (-55:1)$) {};    
  
    \node (label_x) [] at ($(x)+(90:.6)$) {$x$};
    \node (label_y) [] at ($(y)+(90:.6)$) {$y$};
    \node (label_xp) [] at ($(x')+(180:.6)$) {$x'$};
    \node (label_yp) [] at ($(y')+(0:.6)$) {$y'$};
    \node (label_u) [] at ($(-5,1.3)$) {$C_u$};
    \node (label_v) [] at ($(-5,-3.3)$) {$C_v$};
    \node (label_G1) [] at (-3.2,3) {$G_1'$};
    \node (label_G2) [] at (5.1,-3.9) {$G-V(G_1)$};  
    \node (label_Q) []  at (5.8,-1) {$Q$};
    
    \draw[edge, long snake] (x) to [bend left=15] (5,1.5) -- (5,-3.5)  to [bend left=15] (y);

\end{tikzpicture}}
	\caption{}
	\label{fig:final2}
	\end{subfigure}
	
	\caption{Illustrations of terminal separators.
	        Figure~\ref{fig:final1} illustrate the case where $y'\notin V(G_1')$,
	        while Figure~\ref{fig:final2} illustrate the case where $y'\in V(G_1')$.}
\end{figure}
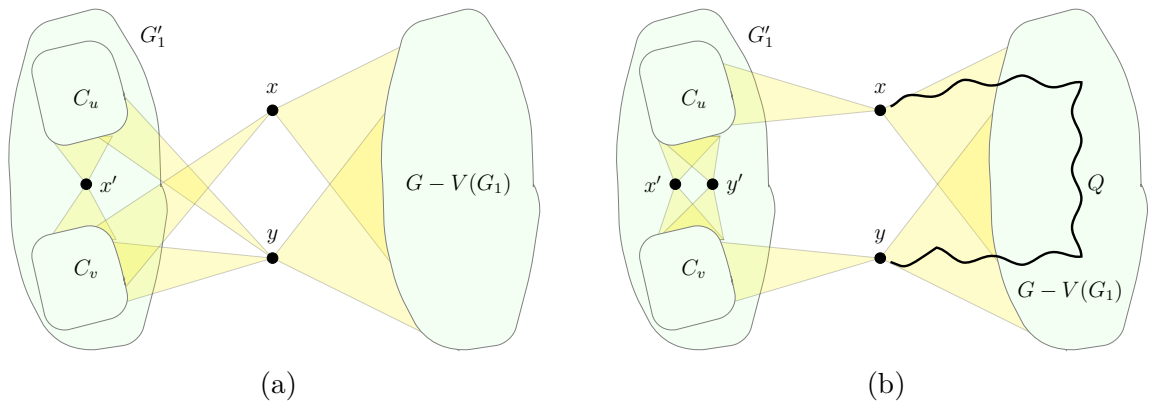

	Finally, suppose that \(y'\in G'_1\).
	Let \(C'_u\) and \(C'_v\) be the components of \(G-S'\) containing, respectively, \(u\) and \(v\).
	Suppose that \(x,y\notin V(C_u')\),
	then \(C_u'\subseteq G - S\), and hence \(C'_u\) is contained in \(G'_1\),
	and since \(v\notin C'_u\), we have \(\eta(S')\leq |V(C'_u)| < |V(G'_1)| = \eta(S)\), a contradiction to the minimality of \(S\).
	Thus, at least one between \(x\) and \(y\) is in \(C'_u\).
	Analogously, at least one between \(x\) and \(y\) is in \(C'_v\).
	Since \(V(C'_u)\cap V(C'_v)=\emptyset\),
	we can suppose that \(x\in V(C'_u)\) and \(y\in V(C'_v)\).
	Since \(G\) is a \(2\)-connected graph, there is a cycle containing \(u\) and any vertex \(z \in V(G'_2)\), and hence there is a path \(Q\) in \(G_2\) joining \(x\) and \(y\) (see Figure~\ref{fig:final2}).
	Since \(S'\subseteq V(G'_1)\), we have \(V(Q) \subseteq G - S'\),
	which implies that \(x\) and \(y\) (and hence \(u\) and \(v\)) 
	are in the same component of \(G-S'\), a contradiction.
	This concludes the proof.
\end{proof}

\section{Concluding remarks}\label{sec:concluding}

The technique used in this paper, as in~\cite{BoSaCoLe}, consists of exploring vertices of degree at most~\(3\) 
in order to reduce a minimal counterexample to a Gallai graph.
Although maximal triangle-free planar graphs have less edges than maximal partial \(3\)-trees,
this paper required a deeper and more powerful technique with new elements,
an extension of the previously introduced reducing subgraphs,
which we call feasible reducing schemes.
We believe that this technique may be extended even further in order to prove Gallai's Conjecture for planar graphs.
For that, one must develop a way of dealing either with triangles, or with vertices with degrees \(4\) and \(5\) 
(recall that every planar graph contains at least four vertices with degree at most \(5\)).

Also, the results presented here and in~\cite{BoSaCoLe} suggest that a stronger statement may hold,
i.e., that a graph \(G\) is either a Gallai graph or \(|E(G)| > \lfloor |V(G)|/2\rfloor (|V(G)|-1)\).
This problem was formalized by Bonamy and Perret~\cite{BonamyPerrett16+}.

\bibliographystyle{amsplain}
\bibliography{bibliografia}

\end{document}